\newtheorem{theorem}{Theorem}
\theoremstyle{plain}
\newtheorem{conjecture}{Conjecture}
\newtheorem{corollary}{Corollary}
\newtheorem{lemma}{Lemma}
\newtheorem{proposition}{Proposition}
\newtheorem{remark}{Remark}
\numberwithin{equation}{section}
\begin{document}
\title[An affirmative answer to an open problem]{An affirmative answer to an open problem on Ramanujan's asymptotic formula of zero-balanced hypergeometric function}
\author{Miao-Kun Wang}
\address{Miao-Kun Wang, Department of mathematics, Huzhou University,
Huzhou, Zhejiang, China, 313000}
\email{wmk000@126.com}
\urladdr{https://orcid.org/0000-0002-0895-7128}
\author{Zhen-Hang Yang$^{\ast \ast }$}
\address{Zhen-Hang Yang, Department of Science and Technology, State Grid Zhejiang Electric Power Company Research Institute, Hangzhou, Zhejiang, China, 310014}
\email{yzhhang1963@outlook.com; yzhkm@163.com}
\urladdr{https://orcid.org/0000-0002-2719-4728}
\author{Tie-Hong Zhao}
\address{Tie-Hong Zhao, School of mathematics, Hangzhou Normal University,
Hangzhou, Zhejiang, China, 311121}
\email{tiehong.zhao@hznu.edu.cn}
\urladdr{https://orcid.org/0000-0002-6394-1049}
\date{}
\subjclass[2000]{Primary 33C05, 26A48; Secondary 40A05, 41A10}
\keywords{Zero-balanced hypergeometric function, beta function, Ramanujan
$R$-function, absolute monotonicity, complete monotonicity}

\thanks{This research was supported by the Natural Science Foundation of Zhejiang Province (LY24A010011).}

\thanks{$^{\ast \ast }$Corresponding author}

\begin{abstract}
In this paper, by using recurrence method and new properties of beta
function and Ramanujan $R$-function, we prove 
the absolute monotonicity
result for a function related to Ramanujan asymptotic formula of zero-balanced hypergeometric function, which gives
an affirmative answer to an open problem proposed by Anderson Vamanamurthy and Vuorinen \cite{Anderson-CIIQM-JW-1997} in 1997.
\end{abstract}

\maketitle

\section{Introduction}
For real numbers $a,b,c$ with $-c\neq 0,-1,-2,\cdots$, the Gaussian hypergeometric function (series) \cite{AS1964} is defined by
\begin{equation}
F(a,b;c;x):={_2F_{1}}(a,b;c;x)=\sum_{n=0}^\infty\frac{(a)_n(b)_n}{(c)_n}\frac{x^n}{n!}\quad \text{for}\quad |x|<1,
\label{hygs}
\end{equation}
where $(a)_0=1$ for $a\neq0$ and
\begin{equation*}
(a)_n=a(a+1)(a+2)\cdots(a+n-1)=\frac{\Gamma(a+n)}{\Gamma(a)}
\end{equation*}for $n\in\mathbb{N}$. Here $\Gamma(x)$ $(x>0)$ is the classical gamma function. In the case $c=a+b$,  $F(a,b;c;x)$ is called zero-balanced. 

The series \eqref{hygs} was firstly introduced by Euler two centuries ago, and later was extensively studied by Gauss since 1812. In the nineteen century, many other famous mathematicians, including  Jacobi, Kummer, Fuchs, Riemann, Legendre, Klein, Schwarz et al., had made significant contributions from different points of view. Specifically, in 1900-1920s, the Indian mathematical genius Ramanujan studied deeply the Gaussian hypergeometric function, and then applied it to number theory and combinatorics, a lot of elegant identities had been written down in his unpublished notebook, however, most of them are lack of the complete proofs (cf. \cite{ASKEY}). In 1985-1998, Berndt edited five notebooks \cite{Berndt1,Berndt2,Berndt3,Berndt4,Berndt5}, in which the  reconstructed proofs of many identities are given in succession, and this also made Ramanujan's results widely accessible. Until now, the Gaussian hypergeometric function has been applied in many fields of mathematics and physics, such as differential equation \cite{Whittaker,Yoshida}, quantum theory \cite{Eckart-1930,Mace-Hellberg-1995}, number theory \cite{Borweins,BBG1995,Shen-RJ-2013}, topology and geometry \cite{Beukers-1989,Beukers-2007,Deligne-Mostow} and geometric function theory \cite{AQVV-Pacific-2000,AVV2001,AVV2007,QV2005}. In particular, since the 1980s, the Gaussian hypergeometric function has been occurred frequently in the theory of conformal and quasiconformal maps, many conformal invariants and distortion theorems of quasiconformal maps all depend on this function. For this, the interesting reader is referred to  Anderson-Vamanamurthy-Vuorinen book "\textit{Conformal Invariants, Inequalities, and Quasiconformal Maps}" \cite{Anderson-CIIQM-JW-1997}, and the recent published papers \cite{AR,Anderson-Sugawa-Va-Vu-2009,Anderson-Sugawa-Va-Vu-2010,Bhayo-Vuorinen,Ma-Qiu-Jiang,QV1999,WCS-2016,Zhang}.

As is well known, the zero-balanced hypergeometric function has a logarithmic singularity at $x=1$, and it satisfies the famous Ramanujan's asymptotic formula
\begin{equation}
F\left(a,b;a+b;x\right)=\frac{R(a,b)-\ln(1-x)}{B(a,b)} +O\left((1-x)\ln(1-x)\right) \text{ \ as }x\rightarrow 1^{-}
	\label{F-Raf}
\end{equation}%
(cf. \cite{Berndt2}),
where%
\begin{equation*}
	B\equiv B\left( a,b\right) =\frac{\Gamma \left( a\right) \Gamma \left(
		b\right) }{\Gamma \left( a+b\right) }\text{ \ and \ }R\equiv R\left(
	a,b\right) =-\psi \left( a\right) -\psi \left( b\right) -2\gamma 
\end{equation*}%
are the beta function and Ramanujan $R$-function (or Ramanujan constant), respectively, here $\psi\left( x\right) =d[\ln\Gamma \left(x\right)]/dx$ is the psi function
and $\gamma =-\psi \left( 1\right) $ is the Euler constant.

In 1997, Anderson, Vamanamurthy and Vuorinen proposed nine open problems in 
\cite[Appendix G]{Anderson-CIIQM-JW-1997}, where the second one is related to \eqref{F-Raf} and states that

\textbf{Open Problem AVV}. \emph{Let} $a,b\in (0,1)$\emph{\ with }$a+b<1$%
\emph{\ and define}
\begin{equation*}
G(a,b,x)=\frac{B\left(a,b\right) F\left( a,b;a+b;x\right) }{\ln \left[
d/\left(1-x\right)\right]},\quad d=e^{R\left(a,b\right)}.
\end{equation*}
\emph{Is it true that the function}%
\begin{equation}
Q(a,b;x)\equiv \frac{G(a,b,x)-1}{1-x}  \label{Q}
\end{equation}%
\emph{has a Maclaurin expansion }$\sum_{n=0}^{\infty }\alpha _{n}x^{n}$\emph{%
\ with non-negative coefficients }$\alpha _{n}$\emph{? A positive answer
would refine \cite[\emph{Theorem 1.52 (1)}]{Anderson-CIIQM-JW-1997}.}

The same problem also appeared in \cite[Open problem 2.3]{Qiu-NMJ-154-1999}, and \cite[3.7 Open problem]{Vuorinen 1}, \cite[2.7 Open problem]{Vuorinen 2}. 

To our knowledge, the origin of Open problem AVV can be traced back to 1992. That year,
Anderson, Vamanamurthy and Vuorinen \cite[Conjecture 3.1 (3)]%
{Anderson-SIAM-JMA-23-1992} made the following conjecture.

\begin{conjecture}
\label{C-AVV}Let $\mathcal{K}(r)=(\pi/2)F(1/2,1/2;1;r^2)$ be the complete elliptic integral of the
first kind. The function
$r\mapsto\left[\mathcal{K}(r)/\ln(4/r')-1\right]/{r'}^2$
is increasing from $(0,1)$ onto $\left( \pi /\ln 16-1,1/4\right)$, where $r'=\sqrt{1-r^2}$ for $r\in(0,1)$.
\end{conjecture}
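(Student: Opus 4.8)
The plan is to recognize the conjecture as the special case $a=b=\tfrac12$ of Open Problem AVV and then attack the resulting monotonicity through the Maclaurin coefficients of $Q$. First I would record the elementary evaluations $B(\tfrac12,\tfrac12)=\pi$ and, using $\psi(\tfrac12)=-\gamma-2\ln2$, $R(\tfrac12,\tfrac12)=-2\psi(\tfrac12)-2\gamma=4\ln2=\ln16$, so that $d=e^{R(1/2,1/2)}=16$. With the substitution $x=r^2$ one has $1-x=r'^2$ and $\ln[16/(1-x)]=2\ln(4/r')$, whence
\[
G\!\left(\tfrac12,\tfrac12,r^2\right)=\frac{\pi\,F\!\left(\tfrac12,\tfrac12;1;r^2\right)}{\ln\!\left[16/(1-r^2)\right]}=\frac{\mathcal K(r)}{\ln(4/r')},
\]
since $\mathcal K(r)=\tfrac{\pi}{2}F(\tfrac12,\tfrac12;1;r^2)$. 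Thus the map in the conjecture equals $Q(\tfrac12,\tfrac12;r^2)$ in the notation of \eqref{Q}, and since $r\mapsto r^2$ is an increasing bijection of $(0,1)$, monotonicity of the conjectured function in $r$ is equivalent to monotonicity of $Q(\tfrac12,\tfrac12;x)$ in $x$; in particular, a proof that its Maclaurin coefficients are nonnegative would give more than is required.

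Next I would pin down the range by two endpoint limits. As $r\to0^{+}$ we have $\mathcal K\to\pi/2$, $r'\to1$ and $\ln(4/r')\to\ln4$, yielding the infimum $\tfrac{\pi/2}{\ln4}-1=\pi/\ln16-1$. As $r\to1^{-}$ I would invoke the classical second-order refinement of Ramanujan's formula \eqref{F-Raf}, namely $\mathcal K(r)=\ell+\tfrac14 r'^2(\ell-1)+o(r'^2)$ with $\ell:=\ln(4/r')$, so that
\[
\frac{\mathcal K(r)-\ell}{r'^2\,\ell}=\frac14\cdot\frac{\ell-1}{\ell}+o(1)\longrightarrow\frac14 .
\]
Combined with monotonicity, these give the supremum $1/4$ and the stated open interval.

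For the monotonicity itself, the tempting route is the monotone form of l'Hôpital's rule applied to $f=A/C$ with $A=\mathcal K-\ell$ and $C=r'^2\ell$ (both vanishing as $r\to1^{-}$). Using the classical $d\mathcal K/dr=(\mathcal E-r'^2\mathcal K)/(r r'^2)$, where $\mathcal E$ is the complete elliptic integral of the second kind, one computes $C'=r(1-2\ell)<0$ and $A'=(\mathcal E-r'^2\mathcal K-r^2)/(r r'^2)$, reducing the problem to the monotonicity of $A'/C'$. The obstruction is that $A'/C'$ is itself a $0/0$ form at $r=0$, since both $\mathcal E-r'^2\mathcal K-r^2$ and $r^2 r'^2(1-2\ell)$ vanish there, so the rule must be iterated and does not visibly terminate; this mirrors the genuine hardness of a problem open since 1997.

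The route I would actually pursue is the power-series/recurrence method. Writing $L(x)=\ln16-\ln(1-x)=\ln16+\sum_{n\ge1}x^{n}/n$, $F(x)=\sum_{n\ge0}c_n x^{n}$ with $c_n=[(\tfrac12)_n/n!]^2$, and $Q(\tfrac12,\tfrac12;x)=\sum_{m\ge0}\alpha_m x^{m}$, the defining identity $(1-x)L(x)\,Q=\pi F(x)-L(x)$ expands $(1-x)L$ as $\ln16+(1-\ln16)x-\sum_{n\ge2}\frac{x^{n}}{n(n-1)}$, whose coefficients $\lambda_k$ are negative for all $k\ge1$ while $\lambda_0=\ln16>0$. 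Matching coefficients of $x^{N}$ then yields
\[
\alpha_N=\frac{1}{\ln16}\left[\Big(\pi c_N-\tfrac1N\Big)+\sum_{k=1}^{N}|\lambda_k|\,\alpha_{N-k}\right],
\]
so nonnegativity would follow by induction once the bracket is shown to be nonnegative. The hard part will be precisely this sign control: the term $\pi c_N-1/N$ is itself negative for small $N$ (already $\pi c_1-1=\pi/4-1<0$), so positivity of $\alpha_N$ rests on a delicate cancellation against the inductively positive sum $\sum_{k}|\lambda_k|\alpha_{N-k}$. Making this quantitative through sharp lower bounds for $c_N$ and for the partial sums — which, for general $a,b$, is exactly where new monotonicity properties of $B(a,b)$ and $R(a,b)$ must enter — is the heart of the argument.
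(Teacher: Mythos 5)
Your reduction is sound: $B(\tfrac12,\tfrac12)=\pi$, $R(\tfrac12,\tfrac12)=\ln 16$, so the conjectured function is exactly $Q(\tfrac12,\tfrac12;r^2)$ in the notation of \eqref{Q}, and both endpoint limits are computed correctly. But the actual content of the statement --- the monotonicity --- is never established. You discard the l'H\^opital route (which, as it happens, is how this conjecture was in fact first settled: Theorem B of Yang--Chu closes precisely that iteration by adding a unimodal-type monotone rule for ratios of power series, so the obstruction you describe is real but surmountable, and the paper disposes of Conjecture \ref{C-AVV} by exactly that citation in its Remark 1), and your power-series route ends with the admission that the ``sign control'' of the bracket is ``the heart of the argument''. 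What you have is a correct setup plus a diagnosis of the difficulty, not a proof.

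Moreover, the induction you envision cannot close in the form you set it up. By Wallis' inequality, $(\tfrac12)_N/N!=\binom{2N}{N}4^{-N}<1/\sqrt{\pi N}$, hence
\begin{equation*}
\pi c_N-\frac1N<0\qquad\text{for every }N\geq 1,
\end{equation*}
not merely for small $N$; so nonnegativity of the earlier $\alpha_{N-k}$ alone can never force the bracket $(\pi c_N-\tfrac1N)+\sum_{k=1}^{N}|\lambda_k|\alpha_{N-k}$ to be nonnegative, and quantitative lower bounds on the $\alpha_k$ become indispensable. Worse, the cancellation is exact at leading order: the deficit is $\tfrac1N-\pi c_N\sim\tfrac{1}{4N^2}$, while $\sum_{k=1}^{N}|\lambda_k|\alpha_{N-k}\sim\bigl(\sum_{j\geq0}\alpha_j\bigr)/N^2=\tfrac{1/4}{N^2}$ (since $\sum_j\alpha_j=Q(1^-)=ab=\tfrac14$), so the sign hinges on second-order asymptotics --- this is why naive bounds will not work. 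The paper sidesteps this trap by a different algebraic preparation: it divides the numerator $U^{\ast}$ by $1-x$ \emph{before} forming the Cauchy product, so the coefficient identity \eqref{an-rr} involves the partial sums $u_n$ of \eqref{un}, and then subtracts consecutive identities to eliminate $\alpha_0$, arriving at \eqref{an+1-rra}, where every coefficient multiplying an earlier $\alpha$ is nonnegative as soon as $p\geq 2$, and the inhomogeneous term $d_n=(n+1)u_{n+1}-nu_n$ is nonnegative precisely when $p\leq R(a,b)$ (Proposition \ref{P-S2}, using the monotone limit $d_n\downarrow R-p$). With $\alpha_0,\alpha_1\geq0$ checked separately, induction is then immediate, and $a=b=\tfrac12$, $p=\ln16$ gives the conjecture --- indeed absolute monotonicity, not just monotonicity. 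Your plan would need either this kind of reformulation of the recurrence, or genuinely new quantitative estimates, before it could be called a proof.
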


Instead of proving the monotonicity of the function in Conjecture 1, a few years later, mathematical researchers showed that the double inequality
\begin{equation}
1+\left( \frac{\pi }{\ln 16}-1\right) r^{\prime 2}<\frac{\mathcal{K}\left(
r\right) }{\ln \left( 4/r^{\prime }\right) }<1+\frac{1}{4}r^{\prime 2},
\label{K><}
\end{equation}%
or equivalently,%
\begin{equation*}
\frac{\pi }{\ln 16}-1<Q\left( \frac{1}{2},\frac{1}{2};r^{2}\right) =\frac{%
\mathcal{K}\left( r\right) /\ln \left( 4/r^{\prime }\right) -1}{r^{\prime 2}}%
<\frac{1}{4},
\end{equation*}%
holds for all $r\in \left(0,1\right)$, where the right-hand side of inequality (\ref{K><}) was proved
by Qiu and Vamanamurthy in \cite{Qiu-SIAM-JMA-27(3)-1996}, while the left-hand
one is due to Alzer in \cite{Alzer-MPCPS-124(2)-1998}. Also, Alzer proved that
both the upper and lower bounds in (\ref{K><}) are sharp. In 2015, by means of some combinations
of monotonic functions and L'Hospital Monotone Rule (cf. \cite[Theorem 1.25]{Anderson-CIIQM-JW-1997}, or Lemma \ref{L-LMR} in Section 3),
Wang, Chu and Qiu \cite{Wang-JMAA-429-2015} extended (\ref{K><}) to the
case of generalized elliptic integral. Precisely, they proved the following
theorem.

\noindent \textbf{Theorem A.} \emph{For} $a\in(0,1/2]$, \emph{let} $R(a)=R(a,1-a)$ and 
$\mathcal{K}_{a}\left(r\right)
=(\pi/2)F( a,1-a;1;r^{2})$ \emph{be the
generalized elliptic integral of the first kind  for } $r\in(0,1)$. \emph{Then the double
inequality}%
\begin{equation}
\alpha <Q(a,1-a;r^{2})=\frac{\mathcal{K}_{a}\left( r\right) }{r^{\prime
2}\sin \left( \pi a\right) \ln \left[ e^{R\left(a\right)/2}/r^{\prime
}\right]}-\frac{1}{r^{\prime 2}}<\beta \label{I-WCQ}
\end{equation}
\emph{holds for all }$a\in (0,1/2]$\emph{\ and }$r\in \left( 0,1\right) $%
\emph{\ if and only if }$\alpha \leq \alpha _{0}=\pi /\left[ R\left(
a\right) \sin \left( \pi a\right) \right] -1$\emph{\ and }$\beta \geq \beta
_{0}=a\left( 1-a\right) $\emph{.}

In 2017, by using L'Hospital Monotone Rule and the
unimodal type monotone rule for the ratio of two power series proved in \cite%
{Yang-JMAA-428-2015}, Yang and Chu \cite{Yang-MIA-20-2017} verified the
monotonicity of the function $r\mapsto Q(a,1-a;r^{2})$ on $\left( 0,1\right) 
$, that is, the following theorem.

\medskip
\noindent \textbf{Theorem B. }The function%
\begin{equation}
r\mapsto Q(a,1-a;r^{2})=\frac{\mathcal{K}_{a}\left( r\right) }{r^{\prime
2}\sin \left( \pi a\right) \ln \left( e^{R\left( a\right) /2}/r^{\prime
}\right) }-\frac{1}{{r'}^2}  \label{Y(r)}
\end{equation}%
is strictly increasing from $\left( 0,1\right) $ onto $\left( \alpha
_{0},\beta _{0}\right) $, where $\alpha _{0}$ and $\beta _{0}$ are defined
in Theorem A.

\begin{remark}
In particular, when $a=1/2$ in (\ref{Y(r)}), the function%
\begin{equation*}
r\mapsto Q\left( \frac{1}{2},\frac{1}{2};r^{2}\right) =\frac{\mathcal{K}%
\left( r\right) /\ln \left( 4/r^{\prime }\right) -1}{r^{\prime 2}}
\end{equation*}%
is strictly increasing from $\left( 0,1\right) $ onto $\left( \pi /\ln
16-1,1/4\right) $, which solved Conjecture \ref{C-AVV} in \cite[Conjecture
3.1(3)]{Anderson-SIAM-JMA-23-1992}.
\end{remark}

Later, in 2019, following the method in  \cite{Yang-MIA-20-2017},  Wang, Chu and Zhang \cite{WCZ-2019} proved the monotonicity property of $Q(a,b;x)$ on $(0,1)$ for arbitrary $(a,b)\in\{a>0,b>0\}$, and derived the following Theorem C as a corollary.


\medskip
\noindent \textbf{Theorem C.} For $a,b>0$ with $a+b\leq 1$, the function $Q(a,b;x)$ defined in \eqref{Q} is strictly increasing from $\left(0,1\right) $ onto $\left(\alpha
_{0}^*,\beta _{0}^*\right) $, where $\alpha_{0}^*=\alpha_{0}^*(a,b)=[B(a,b)-R(a,b)]/B(a,b)$ and $\beta_{0}^*=\beta_{0}^*(a,b)=ab$.

\begin{remark}
From the published literature, Theorem C seems to be the best result related
to Open problem AVV. However, it is far from this problem to be solved. In
other words, Open problem AVV is still open.
\end{remark}

Recall that a function $f$ is called absolutely monotonic on an interval $I$
if  $f^{\left( n\right) }\left( x\right) \geq 0$ 
for $x\in I$ and every $n\geq 0$ (cf. \cite{Widder-LT-1946}). Obviously, if $f(x)$ is a power series converging on $(0,r)$ $(r>0)$, then $f(x)$ is
absolutely monotonic on $(0,r)$ if and only if all coefficients of $f(x)$
are nonnegative. A function $f$ is said to be completely monotonic on an interval $I$ if $\left( -1\right) ^{n}f^{\left( n\right) }\left( x\right) \geq 0$
for $x\in I$ and every $n\geq 0$ (cf. \cite{Bernstein-AM-52-1928,	Widder-LT-1946}).

Recently, several absolutely monotonic functions related to asymptotic
formula for the complete elliptic integral of the first kind were discovered
in succession, see for example, \cite{Yang-JMI-15-2021}, \cite%
{Tian-RM-77-2022}, \cite{Yang-AMS-42-2022}, \cite{Zhao-arXiv-2024}. These
motivate us to seek to solve this problem. For this, we firstly introduce a sequence $\{w_{n}\}$, and define a function $Q_{p}(x)$. For $n\in\mathbb{N}\cup \{0\}$, let
\begin{equation}
w_{n}=\frac{\left( a\right) _{n}\left( b\right) _{n}}{\left( a+b\right)_{n}n!}=
\frac{1}{B}\frac{\Gamma\left( n+a\right) \Gamma\left(n+b\right) }{%
\Gamma\left(n+a+b\right)n!},  \label{wn}
\end{equation}
then 
\begin{equation}
	w_{n+1}=\frac{\left( n+a\right) \left( n+b\right) }{\left( n+1\right) \left(
		n+a+b\right) }w_{n}, \label{wn-rr}
\end{equation}
and use of \eqref{wn} gives
\begin{equation*}
	F\left( a,b;a+b;x\right) =\sum_{n=0}^{\infty }\frac{\left( a\right)
		_{n}\left( b\right) _{n}}{\left( a+b\right)_{n}}\frac{x^{n}}{n!}%
	=\sum_{n=0}^{\infty }w_{n}x^{n}.
\end{equation*}
For $p>0$, let%
\begin{equation}
	Q_{p}(x)\equiv Q_{p}(a,b;x)=\frac{1}{1-x}\left[ \frac{B\left( a,b\right)
		F\left( a,b;a+b;x\right) }{p-\ln \left( 1-x\right) }-1\right]
	=\sum_{n=0}^{\infty }\alpha _{n}x^{n}.  \label{Qp}
\end{equation}

In what follows, we shall state our
strategy of solving the \textbf{Open Problem AVV}, which can be divided into several steps.

\textbf{Step 1}: Establishing the recurrence relation of coefficients $%
\alpha _{n}$. To this end, we let
\begin{equation}
U^{\ast }\left( x\right) =B\left( a,b\right) F\left( a,b;a+b;x\right) -p+\ln
\left( 1-x\right) =\sum_{n=0}^{\infty }u_{n}^{\ast }x^{n},  \label{U*}
\end{equation}
\begin{equation}
U\left( x\right)=\frac{U^{\ast }\left( x\right) }{1-x}
=\sum_{n=0}^{\infty }u_{n}x^{n},\label{U}
\end{equation}
\begin{equation}
V\left( x\right)=p-\ln \left( 1-x\right) =\sum_{n=0}^{\infty }v_{n}x^{n},\label{V}
\end{equation}
where
\begin{align}
u_{n}=&\sum_{k=0}^{n}u_{k}^{\ast }\text{, \ \ }u_{0}^{\ast }=B-p\text{, \ }%
u_{k}^{\ast }=Bw_{k}-\frac{1}{k}\text{ for }k\geq 1,  \label{un} \\
v_{n}=&\frac{1}{n}\text{ for }n\geq 1\text{ and }v_{0}=p.  \label{vn}
\end{align}%
Then from \eqref{Qp}--\eqref{V}  $Q_{p}(x)$ can be written as%
\begin{equation*}
Q_{p}(x)=\frac{U^{\ast }\left( x\right) /\left( 1-x\right) }{V\left(
x\right) }=\frac{U\left( x\right) }{V\left( x\right) }=\sum_{n=0}^{\infty
}\alpha _{n}x^{n}.
\end{equation*}
Using Cauchy product formula we obtain that $\alpha _{0}=B/p-1$ and, for $n\geq 1$,
\begin{equation}
v_{0}\alpha _{n}=u_{n}-\sum_{k=1}^{n}v_{k}\alpha _{n-k}=u_{n}-\sum_{k=1}^{n}%
\frac{\alpha _{n-k}}{k},  \label{an-rr}
\end{equation}%
which, together with \eqref{un} and \eqref{vn}, indicates that%
\begin{equation}
\alpha _{1}=-\frac{\mathcal{S}\left( p\right) }{p^{2}}\text{, \ where \ }%
\mathcal{S}\left( p\right) =p^{2}-B\frac{a+b+ab}{a+b}p+B\text{.}  \label{S}
\end{equation}%
Replacing $n$ by $n+1$ in (\ref{an-rr}) yields%
\begin{equation}
p\alpha _{n+1}=u_{n+1}-\sum_{k=0}^{n}\frac{\alpha _{n-k}}{k+1}.
\label{an+1-rr}
\end{equation}%
Eliminating $\alpha _{0}$ from (\ref{an-rr}) and (\ref{an+1-rr}) reveals that%
\begin{equation*}
p\alpha _{n+1}-\frac{n}{n+1}p\alpha _{n}=u_{n+1}-\frac{n}{n+1}u_{n}-\alpha
_{n}+\sum_{k=1}^{n-1}\left( \frac{n}{n+1}-\frac{k}{k+1}\right) \frac{\alpha_{n-k}}{k},
\end{equation*}%
which, by an arrangement, gives%
\begin{equation}
p\left( n+1\right) \alpha _{n+1}=\left[ \left( n+1\right) u_{n+1}-nu_{n}%
\right] +n\left( p-\frac{n+1}{n}\right) \alpha _{n}+\sum_{k=1}^{n}\frac{n-k}{%
k\left(k+1\right) }\alpha _{n-k}.  \label{an+1-rra}
\end{equation}

{\bf Step 2}: Under the assumption of $(a,b)\in(0,1)$ with $a+b\leq 1$, determine the range of $p$ such that 
$\alpha_{n}\geq 0$ for all $n\geq 0$. Due to \eqref{an+1-rra}, it is easy for us to find that, for any  $p\in E_{1}\cap E_{2}\cap E_{3}\cap E_{4}$, where%
\begin{align}
	E_{1}=&\left\{ p:p>0,\alpha _{0}=\frac{B}{p}-1\geq 0\right\}=\left\{p:0<p<B\right\} ,  \label{E1} \\
	E_{2}=&\left\{ p:p>0,\alpha _{1}=-\dfrac{\mathcal{S}\left( p\right) }{p^{2}%
	}\geq 0\right\} =\left\{ p:p>0,\mathcal{S}\left( p\right) \leq 0\right\} ,
	\label{E2} \\
	E_{3}=&\left\{ p:p>0,\left( n+1\right) u_{n+1}-nu_{n}\geq 0\text{ for all }%
	n\geq 1\right\} ,  \label{E3} \\
	E_{4}=&\left\{ p:p>0,p-\dfrac{n+1}{n}\geq 0\text{ for all }n\geq 1\right\},  \label{E4}
\end{align}%
$\alpha _{n}\geq 0$ for
all $n\geq 0$ by mathematical induction. In other words, if $p\in \bigcap_{i=1}^{4}E_{i}$, then $Q_{p}(x)
$ is absolutely monotonic on $\left( 0,1\right)$. We shall show that $\bigcap_{i=1}^{4}E_{i}=\{p: 2\leq p\leq R(a,b)\}$ according to three substeps.

\textbf{Substep 2.1}: proving that $E_{3}\cap E_{4}=\left\{ p:2\leq p\leq R\left(
a,b\right) \right\}$.

\textbf{Substep 2.2}: proving that $E_{3}\cap E_{4}\subset E_{1}$.

\textbf{Substep 2.3}: proving that $E_{3}\cap E_{4}\subset E_{2}$.

\medskip
From Steps 1 and 2, we deduce our main result immediately.

\begin{theorem}
\label{MT}Let $p>0$ and $a,b\in (0,1)$ with $a+b\leq 1$. If $2\leq p\leq
R\left( a,b\right) $, then the function $x\mapsto Q_{p}(x)$ defined by (\ref%
{Qp}) is absolutely monotonic on $\left( 0,1\right) $. In other words, $%
Q_{p}(x)$ has a Maclaurin expansion $\sum_{n=0}^{\infty }\alpha _{n}x^{n}$
with non-negative coefficients $\alpha _{n}$.
\end{theorem}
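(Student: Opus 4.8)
The plan is to reduce the theorem to the single inclusion $[2,R(a,b)]\subseteq\bigcap_{i=1}^4 E_i$, after which absolute monotonicity is immediate from the recurrence \eqref{an+1-rra}. Indeed, fix $p\in\bigcap_{i=1}^4 E_i$ and argue by induction on $n$. The base cases $\alpha_0\ge0$ and $\alpha_1\ge0$ are exactly the defining conditions of $E_1$ and $E_2$. For the inductive step, suppose $\alpha_0,\dots,\alpha_n\ge0$ with $n\ge1$; in \eqref{an+1-rra} the bracket $(n+1)u_{n+1}-nu_n$ is nonnegative by $E_3$, the factor $p-(n+1)/n$ is nonnegative by $E_4$ so that $n(p-(n+1)/n)\alpha_n\ge0$, and every weight $(n-k)/[k(k+1)]$ in the remaining sum is nonnegative while each $\alpha_{n-k}\ge0$ by hypothesis. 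Hence $p(n+1)\alpha_{n+1}\ge0$, and since $p>0$ we get $\alpha_{n+1}\ge0$. This induction is the engine of the argument; all the real work lies in the three substeps identifying $\bigcap_{i=1}^4 E_i$.

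For Substep~2.1 I would first note that $E_4=[2,\infty)$, since $(n+1)/n=1+1/n$ is decreasing with supremum $2$ attained at $n=1$. To analyse $E_3$, I would make the partial sums explicit: writing $W_n=\sum_{k=0}^n w_k$ and $H_n=\sum_{k=1}^n 1/k$, \eqref{un} gives $u_n=BW_n-p-H_n$, whence a short computation yields $(n+1)u_{n+1}-nu_n=B[W_n+(n+1)w_{n+1}]-H_n-1-p$. Thus $E_3=\{p:0<p\le g(n)\ \text{for all}\ n\ge1\}$ with $g(n):=BW_n-H_n+B(n+1)w_{n+1}-1$, so that $E_3=(0,\inf_{n\ge1}g(n)]$. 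Two facts then pin down the infimum. First, $\lim_{n\to\infty}g(n)=R(a,b)$: the asymptotic formula \eqref{F-Raf} together with Abel's theorem gives $BW_n-H_n\to R$, while $w_n\sim 1/(Bn)$ forces $B(n+1)w_{n+1}\to1$. Second, $g$ is decreasing: using the recurrence \eqref{wn-rr} one computes $(n+2)w_{n+2}-(n+1)w_{n+1}=w_{n+1}\,ab/(n+1+a+b)$, and therefore
\[
g(n+1)-g(n)=Bw_{n+1}\,\frac{n+1+a+b+ab}{n+1+a+b}-\frac{1}{n+1}.
\]
Consequently $\inf_{n\ge1}g(n)=R(a,b)$, giving $E_3=(0,R(a,b)]$ and, combined with $R(a,b)\ge2$, the identity $E_3\cap E_4=[2,R(a,b)]$.

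Substeps~2.2 and~2.3 then reduce to a handful of sharp inequalities between $B$ and $R$ on the region $a,b\in(0,1)$, $a+b\le1$. Since $\alpha_0=B/p-1\ge0$ iff $p\le B$, Substep~2.2 (that $[2,R]\subseteq E_1$) is equivalent to $R(a,b)\le B(a,b)$. For Substep~2.3, the convexity of the quadratic $\mathcal{S}$ in \eqref{S} means $\mathcal{S}\le0$ on $[2,R]$ iff $\mathcal{S}(2)\le0$ and $\mathcal{S}(R)\le0$; the first unwinds to $B\ge 4(a+b)/(a+b+2ab)$, the second is the genuinely coupled inequality $R^2-B\frac{a+b+ab}{a+b}R+B\le0$. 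These, together with the bound $R(a,b)\ge R(1/2,1/2)=\ln16>2$ used above, are precisely the ``new properties of the beta and Ramanujan $R$-functions'' on which the argument rests, and would rely in turn on the monotonicity and convexity of $\psi$.

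I expect the main obstacle to be the $E_3$ analysis, specifically proving that $g$ is decreasing: this amounts to the uniform-in-$n$ estimate $(n+1)Bw_{n+1}\cdot(n+1+a+b+ab)/(n+1+a+b)\le1$, i.e. a sharp bound on the normalised beta-type quantity $mBw_m=\Gamma(m+a)\Gamma(m+b)/[\Gamma(m+a+b)\Gamma(m+1)]$ holding simultaneously for all $m\ge2$ and all admissible $(a,b)$. Unlike the finitely many endpoint inequalities of Substeps~2.2--2.3, this is an infinite family that must be controlled uniformly, and it is where the recurrence structure and the behaviour of $\Gamma$ and $\psi$ have to be exploited most carefully; the coupled inequality $\mathcal{S}(R)\le0$ is a secondary difficulty of the same flavour.
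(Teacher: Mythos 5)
Your plan is the same as the paper's (indeed it is the strategy already laid out in the introduction): the induction engine from \eqref{an+1-rra}, the identification $E_{4}=[2,\infty)$ and $E_{3}=(0,R(a,b)]$, and the reduction of membership in $E_{2}$ to the two endpoint conditions $\mathcal{S}(2)\leq 0$ and $\mathcal{S}(R)\leq 0$ via convexity of the quadratic $\mathcal{S}$. The computations you do carry out are correct and agree with the paper's: the formula $(n+1)u_{n+1}-nu_{n}=B[W_{n}+(n+1)w_{n+1}]-H_{n}-1-p$, the telescoping identity for $g(n+1)-g(n)$, the limit $g(n)\rightarrow R(a,b)$ (the paper argues the same way, via \eqref{F-Raf} and \eqref{gr-af}), the bound $R(a,b)\geq 4\ln 2>2$, and the reduction of Substep 2.2 to the known inequality $R<B$, which is citable from \cite[Theorem 1.52(2)]{Anderson-CIIQM-JW-1997}.

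The problem is that the proposal stops exactly where the proof starts: none of the three inequalities that carry the substance of the theorem is actually proved. (1) You never show $g$ is decreasing, i.e. that $\vartheta_{m}:=mBw_{m}\,\frac{m+a+b+ab}{m+a+b}\leq 1$ for all $m$; you flag this as the main obstacle and leave it open. Ironically this is the easy part: the paper notes that $\frac{\vartheta_{m+1}}{\vartheta_{m}}-1=\frac{ab(a+1)(b+1)}{m(a+b+m+ab)(a+b+m+1)}>0$, so $\{\vartheta_{m}\}$ is increasing, while $\vartheta_{m}\rightarrow 1$ by \eqref{gr-af}; hence $\vartheta_{m}<1$ --- a two-line argument entirely inside the recurrence structure you had already set up. (2) $\mathcal{S}(2)\leq 0$, equivalently $B>4(a+b)/(a+b+2ab)$, is asserted but not proved; the paper (Lemma \ref{L-S(2)<0}) deduces it from the sharp lower bound $B(x,y)>\frac{x+y}{xy}\left(1-\frac{2xy}{x+y+1}\right)$ of \cite{Zhao-arXiv-2023} plus elementary estimates. (3) Most seriously, $\mathcal{S}(R)\leq 0$, which you dismiss as ``a secondary difficulty of the same flavour,'' is the true core of the paper: its proof (Lemma \ref{L-S(R)<0}) requires all of Section 3 --- the monotonicity in $x$, for fixed $c=a+b$, of $R(x,c-x)/B(x,c-x)$, of $[B(x,c-x)-R(x,c-x)]/[x(c-x)]$, and of $R(x,c-x)-c/[x(c-x)]$ (Propositions \ref{P-qc-pd}, \ref{P-B-R-hm}, Corollary \ref{C-R1w-d}), used to reduce to the diagonal case $a=b=c/2$ --- together with three monotonicity results imported from \cite{Yang-2024-submitted} and a three-case numerical verification. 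Nothing in your proposal indicates how this coupled inequality between $R$ and $B$ would be obtained, so what you have is a correct reduction of the theorem to three inequalities, not a proof of it, and your assessment of where the difficulty lies is inverted.
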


\begin{remark}
Taking $p=R\left( a,b\right) $ in Theorem \ref{MT} gives a positive answer
to \textbf{Open problem AVV}.
\end{remark}

\medskip
\section{Proof of Theorem \protect\ref{MT}}

In this section, we prove Theorem \ref{MT} step by step. It should be noted,
that some properties involving $R\left( a,b\right) $ and $B\left( a,b\right) 
$, which are applied to prove Substep 2.3, are postponed to the next section.

\subsection{Substep 2.1: $E_{3}\cap E_{4}=\left\{ p:2\leq p\leq R\left(
a,b\right) \right\} $}

\begin{proposition}[Substep 2.1]
\label{P-S2}Let $u_{n}$, $E_{3}$ and $E_{4}$ be defined by (\ref{un}), (\ref%
{E3}) and (\ref{E4}), respectively. If $a,b\in (0,1)$ with $a+b\leq 1$, then 
\begin{equation*}
E_{3}\cap E_{4}=\left\{ p:2\leq p\leq R\left( a,b\right) \right\} .
\end{equation*}
\end{proposition}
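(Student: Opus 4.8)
The plan is to compute the two sets $E_3$ and $E_4$ separately and then intersect. The set $E_4$ is immediate: the requirement $p-(n+1)/n\ge 0$ for all $n\ge 1$ means $p\ge 1+1/n$ for every $n$, and since $1+1/n$ is largest at $n=1$, we get $E_4=\{p:p\ge 2\}$. The whole content of the proposition is therefore to show that $E_3=\{p:0<p\le R(a,b)\}$; once this is in hand the intersection is exactly $\{p:2\le p\le R(a,b)\}$ (a genuine interval here, since $R(a,b)>2$ on the region $a,b\in(0,1)$, $a+b\le 1$).

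For $E_3$ I would first rewrite the defining quantity. Setting $c_n:=(n+1)u_{n+1}-nu_n$ and using $u_{n+1}=u_n+u_{n+1}^{\ast}$ from \eqref{un}, a short computation gives $c_n=u_n+(n+1)u_{n+1}^{\ast}$. Writing $u_n=B\sum_{k=0}^{n}w_k-p-H_n$ (with $H_n=\sum_{k=1}^{n}1/k$) and $(n+1)u_{n+1}^{\ast}=(n+1)Bw_{n+1}-1$, the parameter $p$ enters only through the single term $-p$, so $c_n=g(n)-p$ with $g(n):=B\sum_{k=0}^{n}w_k-H_n+(n+1)Bw_{n+1}-1$ independent of $p$. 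Hence $E_3=\{p:0<p\le \inf_{n\ge 1}g(n)\}$, and everything reduces to proving $\inf_{n\ge 1}g(n)=R(a,b)$.

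I would organize the analysis of $g$ around $h(m):=mBw_m=\Gamma(m+a)\Gamma(m+b)/[\Gamma(m+a+b)\Gamma(m)]$. The recurrence \eqref{wn-rr} yields $h(m+1)/h(m)=1+ab/[m(m+a+b)]>1$, so $h$ is increasing, while the standard asymptotics of ratios of gamma functions give $h(m)\to 1$; consequently $h(m)<1$ for all $m$. In these terms $g(n)=\big(B\sum_{k=0}^{n}w_k-H_n\big)+h(n+1)-1$. For the limit, note that $BF(a,b;a+b;x)+\ln(1-x)=B+\sum_{n\ge 1}u_n^{\ast}x^n$, whose coefficients satisfy $u_n^{\ast}=(h(n)-1)/n=O(1/n^2)$, so the series converges; by Ramanujan's formula \eqref{F-Raf} its boundary value as $x\to 1^{-}$ is $R(a,b)$, and Abel's theorem gives $\sum_{n\ge 1}u_n^{\ast}=R(a,b)-B$. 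Therefore $B\sum_{k=0}^{n}w_k-H_n=B+\sum_{k=1}^{n}u_k^{\ast}\to R(a,b)$, and since $h(n+1)\to 1$ we obtain $g(n)\to R(a,b)$.

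The main obstacle is to show that $g$ is strictly decreasing, so that its infimum equals its limit. Using $h(m+1)/h(m)=1+ab/[m(m+a+b)]$ one computes
\begin{equation*}
g(n+1)-g(n)=\frac{1}{n+1}\left[h(n+1)\,\frac{n+1+a+b+ab}{n+1+a+b}-1\right],
\end{equation*}
so it suffices to prove $h(m)\,(m+a+b+ab)/(m+a+b)<1$, i.e. $1/h(m)>1+ab/(m+a+b)$, for all $m\ge 2$. Writing $1/h(m)=\prod_{j\ge m}\big(1+ab/[j(j+a+b)]\big)$ (a telescoping of $h(j+1)/h(j)$, legitimate since $h\to 1$) and using $\prod(1+t_j)\ge 1+\sum t_j$ for positive $t_j$, the claim reduces to $\sum_{j\ge m}1/[j(j+a+b)]>1/(m+a+b)$. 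This last inequality I would get by comparing the sum with $\int_m^{\infty}dx/[x(x+a+b)]=\frac{1}{a+b}\ln\!\big(1+(a+b)/m\big)$ and invoking the elementary bound $\ln(1+u)>u/(1+u)$ with $u=(a+b)/m$. Combining the pieces, $g$ is strictly decreasing with limit $R(a,b)$, whence $\inf_{n\ge 1}g(n)=R(a,b)$, so $E_3=(0,R(a,b)]$ and $E_3\cap E_4=\{p:2\le p\le R(a,b)\}$, as claimed.
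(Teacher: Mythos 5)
Your proof is correct, and its architecture is the same as the paper's: $E_4=\{p:p\ge 2\}$ is read off immediately; everything reduces to showing $E_3=(0,R(a,b)]$; the sequence $(n+1)u_{n+1}-nu_n$ (the paper's $d_n$, your $c_n=g(n)-p$) is shown to converge to $R(a,b)-p$ using the gamma-ratio asymptotics together with Ramanujan's formula and Abel summation; and the same algebraic identity $d_{n+1}-d_n=\frac{1}{n+1}(\vartheta_{n+1}-1)$ with $\vartheta_m=h(m)\,\frac{m+a+b+ab}{m+a+b}$ reduces the monotone decrease of $d_n$ to the single inequality $\vartheta_m<1$. Where you genuinely depart from the paper is in proving that inequality. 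The paper computes $\frac{\vartheta_{m+1}}{\vartheta_m}-1=\frac{ab(1+a)(1+b)}{m(m+a+b+ab)(m+1+a+b)}>0$, so $\{\vartheta_m\}$ increases to its limit $1$ and hence stays below $1$ --- the same ``monotone sequence with known limit'' device already used for $d_n$ itself. You instead unwind $1/h(m)$ into the telescoping product $\prod_{j\ge m}\bigl(1+ab/[j(j+a+b)]\bigr)$ (legitimate since $h\to 1$), apply $\prod(1+t_j)>1+\sum t_j$, and finish with an integral comparison and $\ln(1+u)>u/(1+u)$. This is valid and buys an explicit quantitative lower bound on $1/h(m)$, but it is longer than the paper's two-line ratio computation. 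Two small points to tighten. First, your parenthetical claim $R(a,b)>2$ is asserted, not proved; the paper proves it by Jensen's inequality, $R(a,b)\ge -2\psi\left((a+b)/2\right)-2\gamma\ge -2\psi(1/2)-2\gamma=4\ln 2>2$. It is not needed for the set equality itself (both sides are empty if $R<2$), but it is what makes the proposition nonvacuous, so a complete write-up should include it. Second, your estimate $u_n^*=O(1/n^2)$ does not follow from the first-order asymptotic $\Gamma(n+x)/\Gamma(n+y)\sim n^{x-y}$ alone (that only gives $h(n)\to 1$, with no rate); either invoke the second-order expansion $h(n)=1-ab/n+O(1/n^2)$ or observe that your own product representation gives $0<1/h(n)-1\le e^{ab\sum_{j\ge n}j^{-2}}-1=O(1/n)$. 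To your credit, you at least address the convergence of $\sum u_k^*$ before invoking Abel's theorem, a point the paper's proof passes over silently.
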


\begin{proof}
It is clear that%
\begin{equation*}
E_{4}=\left\{ p:p>0,p-\dfrac{n+1}{n}\geq 0\text{ for all }n\geq 1\right\}
=\left\{ p:p\geq 2\right\} .
\end{equation*}%
We next prove that $E_{3}=\left\{ p:0<p\leq R\left( a,b\right) \right\}$. For $n\in\mathbb{N}$, let $d_{n}=(n+1)u_{n+1}-nu_{n}$. Note that, for each $p\in E_{3}$, $d_{n}\geq 0$ for $n\geq 1$ and thereby  $\lim_{n\rightarrow \infty }d_{n}\geq 0$. Thus, if we can show that 
\begin{equation*}
	\lim_{n\rightarrow \infty }d_{n}=R\left(a,b\right)-p.
\end{equation*}
Then we can conclude that $p\leq R(a,b)$ for any $p\in E_{3}$. Actually, by (\ref{un}),%
\begin{equation}
d_{n}=n\left( u_{n+1}-u_{n}\right) +u_{n+1}=nu_{n+1}^{\ast
}+\sum_{k=0}^{n+1}u_{k}^{\ast }.  \label{dn}
\end{equation}%
On the one hand, it is seen from (\ref{un}), (\ref{wn}) and the asymptotic
formula \cite[p. 257, Eq. (6.1.46)]{Abramowitz-HMFFGMT-1970} 
\begin{equation}
\frac{\Gamma \left( n+x\right) }{\Gamma \left( n+y\right) }\thicksim n^{x-y}%
\text{ \ as }n\rightarrow \infty ,  \label{gr-af}
\end{equation}%
that%
\begin{equation*}
nu_{n+1}^{\ast }=nBw_{n+1}-\frac{n}{n+1}=n\frac{\Gamma \left( n+1+a\right)
\Gamma \left( n+1+b\right) }{\Gamma \left( n+2\right) \Gamma \left(
n+1+a+b\right) }-\frac{n}{n+1}\rightarrow 0
\end{equation*}%
as $n\rightarrow \infty $. On the other hand, by (\ref{U*}) and the
asymptotic formula (\ref{F-Raf}), we have%
\begin{align*}
\lim_{n\rightarrow \infty }\sum_{k=0}^{n+1}u_{k}^{\ast }
=&\sum_{k=0}^{\infty }u_{k}^{\ast }=\lim_{x\rightarrow 1^{-}}U^{\ast
}\left( x\right)  \\
=&\lim_{r\rightarrow 1^{-}}\left[ B\left( a,b\right) F\left(
a,b;a+b;x\right)+\ln\left(1-x\right)-p\right]\\
=&R\left( a,b\right) -p.
\end{align*}

Conversely, if $p\leq R\left( a,b\right) $, then we can deduce that $p\in
E_{3}$, or equivalently, $d_{n}=\left( n+1\right) u_{n+1}-nu_{n}\geq 0$ for
all $n\geq 1$. In fact, using (\ref{dn}) and the recurrence formula (\ref%
{wn-rr}), we have%
\begin{align*}
d_{n}-d_{n-1}=&nu_{n+1}^{\ast }+\sum_{k=0}^{n+1}u_{k}^{\ast }-\left(
n-1\right) u_{n}^{\ast }-\sum_{k=0}^{n}u_{k}^{\ast }=\left( n+1\right)
u_{n+1}^{\ast }-\left( n-1\right) u_{n}^{\ast } \\
=&\left( n+1\right) \left[ B\frac{\left( n+a\right) \left( n+b\right) }{%
\left( n+1\right) \left( n+a+b\right) }w_{n}-\frac{1}{n+1}\right]-\left(
n-1\right) \left( Bw_{n}-\frac{1}{n}\right)  \\
=&\frac{1}{n}\left( nBw_{n}\frac{n+a+b+ab}{n+a+b}-1\right) :=\frac{1}{n}%
\left( \vartheta _{n}-1\right) ,
\end{align*}%
\begin{align*}
\frac{\vartheta _{n+1}}{\vartheta {n}}-1=&\frac{\left(n+1\right) B\frac{%
n+1+a+b+ab}{n+1+a+b}}{nB\frac{n+a+b+ab}{n+a+b}}\frac{\left( n+a\right)
\left( n+b\right) }{\left( n+1\right) \left( n+a+b\right) }-1 \\
=&\frac{ab\left( a+1\right) \left( b+1\right) }{n\left( a+b+n+ab\right)
\left( a+b+n+1\right) }>0.
\end{align*}%
From this it thus can be seen that the sequence $\left\{ \vartheta
_{n}\right\} _{n\geq 1}$ is increasing, and hence, by the asymptotic formula
(\ref{gr-af}), 
\begin{equation*}
\vartheta _{n}<\lim_{n\rightarrow \infty }\vartheta _{n}=\lim_{n\rightarrow
\infty }\left( nBw_{n}\frac{n+a+b+ab}{n+a+b}\right) =1.
\end{equation*}%
This implies that $d_{n}-d_{n-1}<0$ for $n\geq 2$, that is, $\left\{
d_{n}\right\} _{n\geq 1}$ is decreasing. It then follows that 
\begin{equation*}
d_{n}>\lim_{n\rightarrow \infty }d_{n}=R\left( a,b\right) -p\geq 0
\end{equation*}%
for all $n\geq 1$.

Finally, we also have to prove that $E_{3}\cap E_{4}\neq \emptyset $ if $%
a,b\in (0,1)$ with $a+b\leq 1$, which suffices to prove that $R\left(
a,b\right) >2$. In fact, since $\psi ^{\prime }\left( x\right) >0$ and $\psi
^{\prime \prime }\left( x\right) <0$ for $x>0$, it follows from the Jensen
inequality that 
\begin{align*}
R\left(a,b\right)=&-\psi \left( a\right) -\psi \left( b\right) -2\gamma
\geq -2\psi \left( \frac{a+b}{2}\right) -2\gamma  \\
\geq &-2\psi \left( \frac{1}{2}\right) -2\gamma =4\ln 2>2,
\end{align*}%
which completes the proof.
\end{proof}

\subsection{Substep 2.2: $E_{3}\cap E_{4}\subset E_{1}$}

\begin{proposition}[Substep 2.2]
Let $E_{1}$, $E_{3}$ and $E_{4}$ be defined by (\ref{E1}), (\ref{E3}) and (%
\ref{E4}), respectively. If $a,b\in (0,1)$ with $a+b\leq 1$, then $E_{3}\cap
E_{4}\subset E_{1}$.
\end{proposition}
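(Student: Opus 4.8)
The goal is to show that $E_3 \cap E_4 \subset E_1$, i.e.\ that for $a,b\in(0,1)$ with $a+b\le 1$, every $p$ satisfying $2\le p\le R(a,b)$ automatically lies in $E_1=\{p:0<p<B\}$, where $B=B(a,b)$. By Proposition \ref{P-S2} we already know $E_3\cap E_4=\{p:2\le p\le R(a,b)\}$, so the entire content of this proposition reduces to the single assertion that $R(a,b)<B(a,b)$ whenever $a,b\in(0,1)$ and $a+b\le 1$. Indeed, if $R<B$ then any $p$ with $p\le R$ satisfies $p<B$, and the condition $2\le p$ already guarantees $p>0$; conversely the nonemptiness $R>2$ was shown in Proposition \ref{P-S2}. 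So the whole task is to establish the inequality $R(a,b)<B(a,b)$ on this triangular region.

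The plan is therefore to prove $R(a,b)<B(a,b)$ for $a,b\in(0,1)$ with $a+b\le 1$. Writing things out, $R=-\psi(a)-\psi(b)-2\gamma$ and $B=\Gamma(a)\Gamma(b)/\Gamma(a+b)$, so I would aim to show $\Gamma(a)\Gamma(b)/\Gamma(a+b)+\psi(a)+\psi(b)+2\gamma>0$ throughout the region. My first instinct is to exploit the symmetry in $a,b$ and the constraint $a+b\le 1$ by fixing the sum $s=a+b\in(0,1]$ and studying the behaviour as the pair varies. A natural reduction is to note that both $B$ and $R$ are largest (or behave most extremely) near the corners of the triangle, where one of $a,b$ tends to $0$; as $a\to0^+$, $\Gamma(a)\sim 1/a\to+\infty$ while $-\psi(a)\sim 1/a\to+\infty$ as well, so both $R$ and $B$ blow up and the inequality must be checked by comparing the leading singular terms. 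This suggests splitting the estimate into a singular part and a bounded part. Alternatively, since the paper explicitly defers ``properties involving $R(a,b)$ and $B(a,b)$'' to the next section, I expect the intended route is to invoke a clean monotonicity or comparison lemma for $B-R$ (or $B/R$) proved there, reducing the proposition to a boundary check along $a+b=1$ or along an edge $a\to0$.

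Concretely, the steps I would carry out are: (i) restate the claim as $R(a,b)<B(a,b)$ and reduce $E_3\cap E_4\subset E_1$ to this inequality using $E_1=\{0<p<B\}$ and the already-established $E_3\cap E_4=[2,R]$; (ii) invoke (or prove) a monotonicity property showing that $B(a,b)-R(a,b)$ attains its infimum on the region at an accessible boundary, for instance along the hypotenuse $a+b=1$ or in a limiting corner; (iii) verify the inequality on that boundary, where along $a+b=1$ one has $B(a,1-a)=\pi/\sin(\pi a)$ and $R(a,1-a)=-\psi(a)-\psi(1-a)-2\gamma=\pi\cot(\pi a)-\ldots$ admits an explicit trigonometric form via the reflection formula, making a direct one-variable comparison feasible. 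The main obstacle will be step (ii): controlling the joint behaviour of $B-R$ over the two-dimensional region, especially near the corner $a\to0^+$ where both functions diverge, so that the comparison of leading singularities $\Gamma(a)\sim1/a$ against $-\psi(a)\sim1/a$ is decisive. I expect the cleanest resolution is to differentiate $B-R$ in one variable with the other fixed, show a definite sign using the next section's properties of $\psi$ and $\Gamma$, and thereby push the problem to a one-dimensional boundary inequality that can be dispatched with the reflection formula and elementary estimates.
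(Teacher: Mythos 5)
Your reduction is exactly the paper's: by Proposition \ref{P-S2}, $E_3\cap E_4=[2,R(a,b)]$, and membership in $E_1$ is the constraint $0<p\leq B(a,b)$, so the proposition collapses to the single inequality $B(a,b)>R(a,b)$. The difference is what happens next. The paper finishes in one line by citing a known result, Theorem 1.52(2) of Anderson--Vamanamurthy--Vuorinen \cite{Anderson-CIIQM-JW-1997}, which asserts $B(a,b)>R(a,b)$ for \emph{all} $a,b>0$. You instead announce a program for proving this inequality from scratch (fix the sum, locate the infimum of $B-R$ on a boundary, check that boundary), but you never carry it out: steps (ii) and (iii) remain conditional (``invoke (or prove)'', ``I expect''). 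Since $B>R$ is the entire mathematical content of the statement, leaving it at the level of a plan is a genuine gap, not a stylistic difference.

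Moreover, the plan as sketched runs into concrete obstacles. First, the infimum of $B-R$ over the region is $0$, attained only in the degenerate corner limits: as $a\to 0^+$ with $b$ fixed, $B(a,b)=1/a-\gamma-\psi(b)+o(1)$ and $R(a,b)=-\psi(a)-\psi(b)-2\gamma=1/a-\gamma-\psi(b)+o(1)$, so $B-R\to 0$. Hence no evaluation ``on an accessible boundary'' can close the argument; one needs strict positivity in the interior, i.e.\ a definite sign for a derivative of $B-R$ together with the zero corner limit. That is precisely what the paper's Proposition \ref{P-B-R-hm} does for $\delta_c(x)=B(x,c-x)-R(x,c-x)$ via an integral representation, and it takes real work. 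Second, your proposed check along the hypotenuse $a+b=1$ is not as elementary as claimed: the reflection formula gives only the \emph{difference} $\psi(1-a)-\psi(a)=\pi\cot(\pi a)$, whereas $R(a,1-a)$ involves the \emph{sum} $\psi(a)+\psi(1-a)$, which has no trigonometric closed form; comparing it with $B(a,1-a)=\pi/\sin(\pi a)$ still requires genuine estimates. If you prefer not to cite the AVV theorem, the self-contained route inside this paper is to invoke the positivity $\delta_c(x)>0$ on $(0,c)$ from Proposition \ref{P-B-R-hm}(ii), which is proved independently in Section 3.
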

\begin{proof}
	It has been shown in Proposition \ref{P-S2} that $E_{3}\cap E_{4}=\left[
	2,R\left( a,b\right) \right] $. Since $E_{1}=(0,B\left( a,b\right) ]$, to
	prove that $E_{3}\cap E_{4}\subset E_{1}$, it suffices to prove that $B\left( a,b\right) >R\left(a,b\right)$. It was proved in  \cite[Theoerm 1.52(2)]{Anderson-CIIQM-JW-1997} that $B(a,b)>R(a,b)$ for all $a,b>0$, therefore the proof is completed.
\end{proof}

\subsection{Substep 2.3: $E_{3}\cap E_{4}\subseteq E_{2}$}

\begin{lemma}
\label{L-S(2)<0}Let $\mathcal{S}\left( p\right) $ be defined by (\ref{S}).
If $a,b>0$ and $a+b\leq 1$, then $\mathcal{S}\left( 2\right) <0$.
\end{lemma}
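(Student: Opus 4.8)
The plan is to compute $\mathcal{S}(2)$ in closed form and recast the required inequality $\mathcal{S}(2)<0$ as a lower bound on the beta function. Putting $p=2$ in the definition of $\mathcal{S}$ from \eqref{S} and using $2\,\frac{a+b+ab}{a+b}-1=1+\frac{2ab}{a+b}$, one finds
\[
\mathcal{S}(2)=4-B\Bigl(1+\tfrac{2ab}{a+b}\Bigr),
\]
so that the lemma is equivalent to showing $B\bigl(1+\tfrac{2ab}{a+b}\bigr)>4$ throughout the triangle $\{a,b>0,\ a+b\le 1\}$.

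The next step is to clear the poles of $B=B(a,b)$ at the corners of the triangle by writing $\Gamma(a)=\Gamma(a+1)/a$, $\Gamma(b)=\Gamma(b+1)/b$ and $\Gamma(a+b)=\Gamma(a+b+1)/(a+b)$, which gives $B=\frac{a+b}{ab}\,\frac{\Gamma(a+1)\Gamma(b+1)}{\Gamma(a+b+1)}$ and turns the target inequality into
\[
\Bigl(\tfrac1a+\tfrac1b+2\Bigr)\,\frac{\Gamma(a+1)\Gamma(b+1)}{\Gamma(a+b+1)}>4 .
\]
The gain is that both factors are now finite and can be estimated separately. For the rational factor, the AM--HM inequality gives $\tfrac1a+\tfrac1b=\frac{a+b}{ab}\ge\frac{4}{a+b}\ge 4$, the last step because $a+b\le 1$; hence $\tfrac1a+\tfrac1b+2\ge 6$.

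It then remains to bound the gamma factor $G(a,b):=\Gamma(a+1)\Gamma(b+1)/\Gamma(a+b+1)$ below by $2/3$, for then the product exceeds $6\cdot\tfrac23=4$ strictly. Since $a+b+1\in(1,2]$ and $\Gamma\le 1$ on $[1,2]$, the denominator is at most $1$, so $G\ge\Gamma(a+1)\Gamma(b+1)$; logarithmic convexity of $\Gamma$ then yields $\Gamma(a+1)\Gamma(b+1)\ge\Gamma\bigl(\tfrac{a+b}{2}+1\bigr)^2$, reducing matters to a single value of $\Gamma$ on $[1,\tfrac32]$. (As a check, the same convexity shows both factors are simultaneously minimized at $a=b=\tfrac{a+b}{2}$, so the sharp minimum on each level line $a+b=s$ sits on the diagonal, the overall value at $a=b=\tfrac12$ being $6\cdot\tfrac{\pi}{4}=\tfrac{3\pi}{2}>4$.)

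The main obstacle is precisely this last lower bound: one must establish $\Gamma(x)>\sqrt{2/3}$ for $x\in[1,\tfrac32]$. This is where the global minimum of the gamma function enters, namely $\min_{x>0}\Gamma(x)=\Gamma(x^{\ast})\approx 0.8856$ attained at $x^{\ast}\approx 1.4616\in(1,\tfrac32)$, whence $\Gamma(x)^2\ge\Gamma(x^{\ast})^2>2/3$ on the whole interval. Everything else is routine algebra together with a symmetry-and-convexity reduction, so the only genuinely analytic input is a clean lower bound for the gamma minimum, which is standard.
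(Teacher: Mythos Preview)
Your argument is correct, and it follows a genuinely different route from the paper's. Both proofs begin from the same expression $\mathcal{S}(2)=4-\dfrac{a+b+2ab}{a+b}\,B$, but then diverge. The paper quotes the beta lower bound $B(x,y)>\dfrac{x+y}{xy}\Bigl(1-\dfrac{2xy}{x+y+1}\Bigr)$ from \cite{Zhao-arXiv-2023} and reduces the question to checking that the polynomial $4+2c-4c^2-c^3$ is positive for $c=a+b\in(0,1]$, which is elementary. You instead strip the poles via $\Gamma(t)=\Gamma(t+1)/t$, split the target into the rational piece $\tfrac1a+\tfrac1b+2\ge 6$ (AM--HM together with $a+b\le1$) and the gamma piece $\Gamma(a+1)\Gamma(b+1)/\Gamma(a+b+1)>2/3$, the latter handled by $\Gamma(a+b+1)\le1$ on $(1,2]$, log-convexity, and the numerical value of the global minimum $\min_{x>0}\Gamma(x)\approx0.8856>\sqrt{2/3}$. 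What this buys you is self-containment: you avoid importing a beta inequality from another paper and rely only on classical properties of $\Gamma$. What the paper's route buys is a completely algebraic finish with no appeal to the numerical gamma minimum. Your strict inequality is safe because the gamma bound $\Gamma(x)^2>2/3$ is strict everywhere on $[1,3/2]$, so even at the equality case $a=b=\tfrac12$ of the AM--HM step the product $6\cdot\tfrac{\pi}{4}=\tfrac{3\pi}{2}>4$ holds. The only point worth tightening in a final write-up is to cite or prove the bound $\min_{x>0}\Gamma(x)>\sqrt{2/3}$ rather than merely asserting it as ``standard.''
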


\begin{proof}
By (\ref{S}), we have
\begin{equation}
\mathcal{S}\left(2\right) =4-\frac{a+b+2ab}{a+b}B.  \label{S(2)}
\end{equation}%
It was prove in \cite[Theorem 1.1]{Zhao-arXiv-2023} that%
\begin{equation*}
B\left( x,y\right) >\frac{x+y}{xy}\left( 1-\frac{2xy}{x+y+1}\right) 
\end{equation*}%
for $x,y\in \left( 0,1\right) $. Applying the above inequality to \eqref{S(2)}
gives%
\begin{align*}
\mathcal{S}\left( 2\right)  
<&4-\frac{a+b+2ab}{a+b}\frac{a+b}{ab}\left( 1-%
\frac{2ab}{a+b+1}\right)  \\
=&-\frac{\left( a+b\right) ^{2}+a+b-2ab-4ab\left( a+b\right)-4a^2b^2}{ab\left( a+b+1\right) } \\
:=&-\frac{S\left(a,b\right) }{ab\left(a+b+1\right) }.
\end{align*}%
By the elementary inequality $ab\leq \left( a+b\right) ^{2}/4$ for $a,b>0$,
we have%
\begin{align*}
S\left(a,b\right)&\geq \left( a+b\right) ^{2}+a+b-\frac{1}{2}\left(
a+b\right) ^{2}-\left( a+b\right) ^{3}-\frac{1}{4}\left( a+b\right) ^{4} \\
&=\frac{1}{4}c\left( 4+2c-4c^{2}-c^{3}\right) >0
\end{align*}%
for $c=a+b\in (0,1]$. Then $\mathcal{S}\left( 2\right) <0$ if $a,b>0$ and $%
a+b\leq 1$, which completes the proof.
\end{proof}

\begin{lemma}
\label{L-S(R)<0}Let $\mathcal{S}\left(p\right) $ be defined by (\ref{S}).
If $a,b>0$ and $a+b\leq 1$, then $\mathcal{S}\left( R\right)<0$.
\end{lemma}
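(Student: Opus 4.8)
The plan is to exploit the fact that $\mathcal{S}$ is a quadratic in $p$ opening upward, so that on the interval $E_{3}\cap E_{4}=[2,R(a,b)]$ (Proposition~\ref{P-S2}) it attains its maximum at an endpoint; since $\mathcal{S}(2)<0$ is already in hand (Lemma~\ref{L-S(2)<0}), the residual task is exactly the endpoint estimate $\mathcal{S}(R)<0$. Setting $\lambda=(a+b+ab)/(a+b)=1+ab/(a+b)$ and writing $R=R(a,b)$, $B=B(a,b)$, I would first rewrite $\mathcal{S}(R)=R^{2}-B\lambda R+B=(1-\lambda R)B+R^{2}$, an affine function of $B$ with slope $1-\lambda R$. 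Because $\lambda>1$ and, by Proposition~\ref{P-S2}, $R\ge 4\ln 2>1$, the slope is negative, so $\mathcal{S}(R)<0$ is equivalent to the sharp lower bound $B>R^{2}/(\lambda R-1)=(a+b)R^{2}/[(a+b+ab)R-(a+b)]$. This turns the lemma into a single inequality relating $B$ and $R$.

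Next I would record why the naive estimates are insufficient. The inequality $B>R$ (used in Substep~2.2) and $R\ge 4\ln 2$ are far too weak: an asymptotic check with $a=b=\varepsilon\to0^{+}$ gives $B,R\sim 2/\varepsilon$ and $\lambda\to1$, so both sides of the target are of size $2/\varepsilon$ and $\mathcal{S}(R)$ is a difference of two terms of size $1/\varepsilon$ that cancel to leading order, leaving a quantity of size $O(\varepsilon)$ whose sign is decided by the \emph{subleading} behaviour of $B-R$ and of $1/R-ab/(a+b)$. Dividing the target through by $BR$, it reads $R/B+1/R<1+ab/(a+b)$, i.e. $(B-R)/B>1/R-ab/(a+b)$; thus what is truly needed is a lower bound on the gap $B-R$ that dominates $1/R-ab/(a+b)$ uniformly on the triangle $\{a,b\in(0,1),\,a+b\le1\}$.

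Accordingly, the substantive input is the collection of sharp relations between $B(a,b)$ and $R(a,b)$ that the text defers to Section~3. My strategy would be to establish there a lower estimate $B\ge\Phi(R,a,b)$ (or, equivalently, a suitable monotonicity/convexity property of $B-R$, or of $1/R-ab/(a+b)$) that majorizes $R^{2}/(\lambda R-1)$, and then to reduce the resulting inequality, as in the proof of Lemma~\ref{L-S(2)<0}, to a polynomial or rational inequality in the symmetric variables $c=a+b$ and $q=ab$ on the region $0<c\le1$, $0<q\le c^{2}/4$. With such an estimate, the conclusion $\mathcal{S}(R)<0$ drops out of the affine-in-$B$ reduction above.

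The main obstacle is precisely this asymptotic tightness: since the target inequality degenerates to an equality to leading order as $a,b\to0^{+}$, any usable bound on $B$ must be simultaneously sharp to two orders and tractable enough to verify on the whole parameter triangle rather than just near its corner. I expect that controlling the sign of the $O(\varepsilon)$ remainder globally—not merely confirming it in the limiting regime—will be the delicate part, and that it is exactly here that the new properties of the beta and Ramanujan $R$-functions promised in Section~3 are indispensable.
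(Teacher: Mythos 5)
Your algebraic setup is correct and, in fact, coincides with the paper's: after multiplying by the positive factor $R/(ab)$, your target inequality $(B-R)/B>1/R-ab/(a+b)$ is exactly the inequality $\mathcal{S}_{c}(a)>0$ of (\ref{Sc}), namely
\begin{equation*}
\frac{R}{B}\cdot \frac{B-R}{ab}+\frac{R}{a+b}-\frac{1}{ab}>0 ,
\end{equation*}
and your diagnosis of the difficulty is also accurate: as $a=b=\varepsilon \rightarrow 0^{+}$ both sides are of order $\varepsilon ^{3}$ (the margin being governed by $\zeta (3)$ versus $\pi ^{2}/12$), so no estimate with uniform slack can work. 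The genuine gap is that your proposal stops precisely at this point. No minorant $\Phi (R,a,b)$ is produced, no mechanism for verifying the inequality on the whole triangle is supplied, and your final paragraph explicitly defers the "delicate part" to unspecified properties of $B$ and $R$. What has actually been established is a (correct) reformulation and a (correct) explanation of why naive bounds fail; the lemma itself remains unproved.

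Moreover, the one concrete route you sketch --- a lower bound for $B$ reducing the problem to a polynomial or rational inequality in $c=a+b$ and $q=ab$, in the spirit of Lemma \ref{L-S(2)<0} --- is unlikely to be realizable. That method succeeds at $p=2$ exactly because $\mathcal{S}(2)<0$ holds with uniform slack; at $p=R$ the inequality is asymptotically an equality, and the surviving margin involves transcendental constants, so no rational-in-$(c,q)$ bound can be expected to absorb it. The paper's proof instead supplies two ideas absent from your plan: first, for fixed $c$, the left-hand side above is written as $q_{c}(a)d_{c}(a)+r_{c}(a)$ with $q_{c}=R_{c}/B_{c}$, $d_{c}=(B_{c}-R_{c})/[a(c-a)]$ and $r_{c}=[R_{c}-c/(a(c-a))]/c$, and each of these is shown to be decreasing in $a$ on $(0,c/2]$ with $q_{c},d_{c}>0$ (Propositions \ref{P-qc-pd}, \ref{P-B-R-hm} and Corollary \ref{C-R1w-d}); this monotone decomposition pushes the whole problem to the diagonal $a=b=c/2$. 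Second, the resulting one-variable inequality on $(0,1/2]$ is settled by combining monotonicity in the diagonal variable (Proposition \ref{P-B,R-m}) with a three-interval numerical case check --- that is, a finite numerical verification, not a rational inequality, is what ultimately handles the tight margin you identified. Without analogues of these two steps, your proposal does not yield the lemma.
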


\begin{proof}
Without loss of generality, we assume that $0<a\leq b$. Let $b=c-a$, then $c\leq 1$ and $a\in
(0,c/2]$. The inequality $\mathcal{S}\left( R\right)<0$ can be written
as%
\begin{equation*}
\mathcal{S}\left( R\right) =R(a,c-a)^{2}-R(a,c-a)B(a,c-a)\frac{c+a(c-a)}{c}%
+B(a,c-a)<0,
\end{equation*}%
or equivalently,%
\begin{equation}
\mathcal{S}_{c}\left( a\right) :=\frac{R(a,c-a)}{B(a,c-a)}\frac{%
B(a,c-a)-R(a,c-a)}{a(c-a)}+\frac{a(c-a)R(a,c-a)-c}{ca(c-a)}>0  \label{Sc}
\end{equation}%
for $a\in (0,c/2]$ and any fixed $c\in (0,1]$.

As shown in Proposition \ref{P-qc-pd}, Proposition \ref{P-B-R-hm} and
Corollary \ref{C-R1w-d} in the following section, for any fixed $c\in(0,1]$, the function

$(i)$ $x\mapsto R(x,c-x)/B(x,c-x)$ is positive and strictly	decreasing on $(0,c/2]$;
	
$(ii)$ $x\mapsto \left[ B(x,c-x)-R(x,c-x)\right] /\left[ x\left( c-x\right)	\right]$ is positive and strictly decreasing on $(0,c/2]$;
	
$(iii)$ $x\mapsto R(x,c-x) -c/\left[ x\left(c-x\right)\right]$
is strictly decreasing on $(0,c/2]$.

\noindent Therefore, the function
\begin{equation*}
a\mapsto \mathcal{S}_{c}\left( a\right) =\frac{R(a,c-a)}{B(a,c-a)}\frac{%
	B(a,c-a)-R(a,c-a)}{a(c-a)}+\frac{1}{c}\left[R(a,c-a)-\frac{c}{a(c-a)}\right]
\end{equation*}%
is strictly decreasing on $(0,c/2]$. This yields%
\begin{align*}
\mathcal{S}_{c}\left( a\right)>&\mathcal{S}_{c}\left( \frac{c}{2}\right) =%
\frac{R\left( c/2,c/2\right)}{B\left( c/2,c/2\right) }\frac{B\left(
c/2,c/2\right) -R\left( c/2,c/2\right) }{\left( c/2\right) ^{2}} \\
&+\frac{1}{2}\left[ \frac{1}{(c/2)}R\left( \frac{c}{2},\frac{c}{2}\right) -%
\frac{2}{\left(c/2\right)^{2}}\right] :=\mathcal{S}^{\ast }\left( \frac{c}{%
2}\right) .
\end{align*}%
Thus, if we can prove that%
\begin{equation*}
\mathcal{S}^{\ast }\left( x\right) =q\left( x\right) d\left( x\right)
+r\left( x\right) >0
\end{equation*}%
for $x\in \left(0,1/2\right]$, then the inequality $\mathcal{S}\left(
R\right)<0$ follows, where%
\begin{equation*}
q\left( x\right) =\frac{R(x,x)}{B(x,x)}\text{, \ }d\left( x\right) =\frac{%
B\left( x,x\right) -R\left( x,x\right) }{x^{2}}\text{ \ and \ }r\left(
x\right)=\frac{1}{2}\left[\frac{xR\left( x,x\right) -2}{x^{2}}\right].
\end{equation*}%
By Proposition \ref{P-B,R-m}, $x\mapsto q\left( x\right) ,d\left( x\right) $
are strictly decreasing and positive on $\left(0,1/2\right]$, and while $x\mapsto r\left(
x\right) $ is strictly increasing $\left(0,1/2\right]$. We now
distinguish three cases to prove that $\mathcal{S}^{\ast }\left( x\right) >0$
for $x\in (0,1/2]$.

\textbf{Case 1}: $x\in (0,1/4]$. Then 
\begin{equation*}
q\left( x\right) \geq q\left( \frac{1}{4}\right)=0.984382\cdots,\quad
d\left( x\right) \geq d\left( \frac{1}{4}\right)=1.853168\cdots,
\end{equation*}
\begin{equation*}
r\left( x\right) \geq r\left( 0^{+}\right)=-\frac{\pi^{2}}{6},
\end{equation*}
which implies that%
\begin{align*}
\mathcal{S}^{\ast}\left( x\right)\geq&q\left( \frac{1}{4}\right)
d\left(\frac{1}{4}\right) +r\left( 0^{+}\right)  \\
=&0.984382\cdots\times 1.853168\cdots-\frac{\pi^{2}}{6}=0.179\cdots>0.
\end{align*}

\textbf{Case 2}: $x\in (1/4,7/16]$. Then
\begin{equation*}
q\left( x\right) \geq q\left( \frac{7}{16}\right) =0.920714\cdots,\quad
d\left( x\right) \geq d\left(\frac{7}{16}\right) =1.558534\cdots, 
\end{equation*}
\begin{equation*}
r\left( x\right)\geq r\left( \frac{1}{4}\right)=2\pi +12\ln{2}-16,
\end{equation*}
which implies that
\begin{align*}
\mathcal{S}^{\ast}\left( x\right) \geq &q\left(\frac{7}{16}\right)
d\left( \frac{7}{16}\right) +r\left( \frac{1}{4}\right) \\
=&0.920714\cdots\times 1.558534\cdots+\left(2\pi +12\ln{2}-16\right)
=0.035\cdots>0.
\end{align*}

\textbf{Case 3}: $x\in (7/16,1/2]$. Then
\begin{equation*}
q\left( x\right) \geq q\left( \frac{1}{2}\right)=\frac{4\ln{2}}{\pi}, \quad
d\left( x\right)\geq d\left(\frac{1}{2}\right)=4\pi-16\ln 2>0,	
\end{equation*}
\begin{equation*}
r\left( x\right) \geq r\left( \frac{7}{16}\right) =-1.265376\cdots,	
\end{equation*}
which implies that
\begin{align*}
\mathcal{S}^{\ast }\left( x\right)  \geq &q\left( \frac{1}{2}\right)
d\left( \frac{1}{2}\right) +r\left( \frac{7}{16}\right)  \\
=&\frac{4 \ln{2}}{\pi }\times \left( 4\pi-16\ln 2\right)
-1.265376\cdots=0.112\cdots>0.
\end{align*}%
This completes the proof.
\end{proof}

\begin{proposition}[Substep 2.3]
\label{P-S4}Let $E_{2}$, $E_{3}$ and $E_{4}$ be defined by (\ref{E2}), (\ref%
{E3}) and (\ref{E4}), respectively. If $a,b\in (0,1)$ with $a+b\leq 1$, then 
$E_{3}\cap E_{4}\subseteq E_{2}$.
\end{proposition}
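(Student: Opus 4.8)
The plan is to reduce the set inclusion to a single pointwise inequality and then exploit the convexity of the quadratic $\mathcal{S}$. First I would invoke Proposition \ref{P-S2}, which already identifies $E_{3}\cap E_{4}=[2,R(a,b)]$. Thus the desired inclusion $E_{3}\cap E_{4}\subseteq E_{2}$ is equivalent to the assertion that $\mathcal{S}(p)\leq 0$ for every $p\in[2,R(a,b)]$, where $\mathcal{S}$ is the quadratic defined in \eqref{S}.

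Next I would observe that $\mathcal{S}(p)=p^{2}-B\frac{a+b+ab}{a+b}p+B$ is a quadratic polynomial in $p$ with positive leading coefficient, hence strictly convex since $\mathcal{S}''(p)=2>0$. A convex function on a closed interval attains its maximum at one of the two endpoints, so for every $p\in[2,R(a,b)]$ one has
\begin{equation*}
\mathcal{S}(p)\leq\max\{\mathcal{S}(2),\mathcal{S}(R)\}.
\end{equation*}
It then remains only to control the two endpoint values, and this is exactly what the preceding lemmas provide: Lemma \ref{L-S(2)<0} gives $\mathcal{S}(2)<0$ and Lemma \ref{L-S(R)<0} gives $\mathcal{S}(R)<0$, both under the standing hypothesis $a,b\in(0,1)$ with $a+b\leq 1$. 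Combining these two bounds with the displayed inequality yields $\mathcal{S}(p)<0$ throughout $[2,R(a,b)]$, whence $[2,R(a,b)]\subseteq E_{2}$ and therefore $E_{3}\cap E_{4}\subseteq E_{2}$, as claimed.

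I expect no genuine obstacle in this substep itself: the substantive difficulty has already been absorbed into the two lemmas, and the proposition is merely a short assembly step. The only point requiring care is to ensure that both endpoint inequalities hold simultaneously, so that the convexity bound forces $\mathcal{S}\leq 0$ on the whole interval; the lemmas guarantee precisely this. The real analytic work—establishing $\mathcal{S}(R)<0$ through the monotonicity properties (i)--(iii) of the relevant ratios of $B$ and $R$—is deferred to the next section and is where the heavy lifting resides, whereas here one needs only the elementary fact that an upward-opening parabola lies below the chord joining two points at which it is negative.
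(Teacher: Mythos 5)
Your proposal is correct and follows essentially the same route as the paper: both reduce the inclusion to showing $\mathcal{S}(p)\leq 0$ on $[2,R(a,b)]$ and rely entirely on Lemmas \ref{L-S(2)<0} and \ref{L-S(R)<0} together with the elementary geometry of an upward-opening parabola. The only cosmetic difference is the final step: you invoke convexity (maximum at an endpoint), while the paper additionally notes $\mathcal{S}(0)=B>0$ to locate the two roots $p_{1}<2<R<p_{2}$ and write $E_{2}=[p_{1},p_{2}]$ explicitly—both are equivalent ways of finishing.
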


\begin{proof}
Since%
\begin{equation*}
\mathcal{S}\left( p\right)=p^{2}-B\frac{a+b+ab}{a+b}p+B
\end{equation*}%
is a quadratic polynomial of $p$,  $\mathcal{S}(0)=B(a,b)>0$, and  $\mathcal{S}(2)<0$, $\mathcal{S}(R)<0$ by Lemmas \ref{L-S(2)<0} and \ref{L-S(R)<0},
$\mathcal{S}\left( p\right) $ has two
zeros $p_{1}=p_{1}\left( a,b\right)$ and $p_{2}=p_{2}\left(a,b\right)$ which satisfies $0<p_{1}<2<R(a,b)<p_{2}$. Then
\begin{equation*}
E_{2}=\left\{ p:p>0,\mathcal{S}\left( p\right) \leq 0\right\} =\left[
p_{1},p_{2}\right],
\end{equation*}%
and $\left[ 2,R\right] \subseteq \left[ p_{1},p_{2}\right]$, thereby completes the proof.
\end{proof}

\subsection{Proof of Theorem \protect\ref{MT}}

\begin{proof}
By Propositions \ref{P-S2}--\ref{P-S4} we have that%
\begin{equation*}
E_{1}\cap E_{2}\cap E_{3}\cap E_{4}=E_{3}\cap E_{4}=\left\{ p:2\leq p\leq
R\left( a,b\right) \right\} .
\end{equation*}%
This implies that $\alpha _{0}>0$, $\alpha _{1}>0$, $\left( n+1\right)
u_{n+1}-nu_{n}\geq 0$ for all $n\geq 1$, $[p-( n+1)/n]\geq 0$ for all $n\geq 1$. Assume that $\alpha _{n}>0$ for $1\leq
n\leq m$. Then $\alpha _{m+1}>0$ via (\ref{an+1-rra}). By induction, we
arrive at $\alpha _{n}>0$ for all $n\geq 1$. This together with $\alpha
_{0}>0$ proves the absolute monotonicity of $Q_{p}\left( x\right) $ on $%
\left( 0,1\right) $ if $2\leq p\leq R\left( a,b\right) $. The proof of
Theorem \ref{MT} is done.
\end{proof}

\medskip
\section{Some properties of Beta function and Ramanujan $R$-function}

In order to verify the positivity of $\mathcal{S}_{c}\left( a\right) $
defined by (\ref{Sc}), we need some properties of $R\left( a,b\right) $ and $%
B\left( a,b\right) $ in the case of $\left( a,b\right) =\left( x,c-x\right) $
for fixed $c>0$ and any $x\in \left( 0,c\right) $. In this case, we denote by%
\begin{align}
B_{c}\left( x\right) & =B\left( x,c-x\right) =\frac{\Gamma \left( x\right)
\Gamma \left( c-x\right) }{\Gamma \left( c\right) },  \label{Bc} \\
R_{c}\left( x\right) & =R\left( x,c-x\right) =-\psi \left( x\right) -\psi
\left( c-x\right) -2\gamma .  \label{Rc}
\end{align}
Before proving properties of $B_{c}\left( x\right) $ and $R_{c}\left(
x\right) $, let us recall several basic knowledge points. The first point is
the integral representations of beta function (cf. \cite[p. 258, Eq. (6.2.1)]%
{Abramowitz-HMFFGMT-1970}):%
\begin{equation}
B(p,q)=\int_{0}^{1}t^{p-1}(1-t)^{q-1}dt=\int_{0}^{\infty }\frac{t^{p-1}}{%
(1+t)^{p+q}}dt=\int_{0}^{1}\frac{t^{p-1}+t^{q-1}}{(1+t)^{p+q}}dt,
\label{B-ir}
\end{equation}%
where the last representation follows from
\begin{align*}
\int_{1}^{\infty }\frac{t^{p-1}}{(1+t)^{p+q}}dt\overset{t=1/s}{%
=\!=\!=\!}\int_{0}^{1}\frac{s^{q-1}}{(1+s)^{p+q}}ds.
\end{align*}

The second point is the series, integral representations and recurrence formulas of psi and
polygamma functions:%
\begin{equation}
\psi (x)+\gamma =\int\limits_{0}^{\infty }\frac{e^{-t}-e^{-xt}}{1-e^{-t}}%
dt=\int_{0}^{1}\frac{1-t^{x-1}}{1-t}dt,  \label{psi-ir}
\end{equation}%
\begin{equation}
\psi ^{(n)}(x)=\left\{ 
\begin{array}{ll}
-\gamma -\dfrac{1}{x}+\sum\limits_{k=0}^{\infty }\dfrac{x}{k(k+x)} & \text{%
for }n=0, \\ 
(-1)^{n+1}n!\sum\limits_{k=0}^{\infty }\dfrac{1}{(x+k)^{n+1}} & \text{for }%
n\in \mathbb{N}
\end{array}%
\right.   \label{psi-sr}
\end{equation}%
and
\begin{equation}
\psi ^{(n)}(x+1)-\psi ^{(n)}(x)=\left( -1\right) ^{n}\frac{n!}{x^{n+1}} \ \text{for }%
n\in \mathbb{N}\cup\{0\} \label{rfpsi}
\end{equation}
(cf. \cite[p.259-260, Eq. (6.3.16), Eq. (6.3.22), Eq. (6.4.6), Eq. (6.4.10)]{Abramowitz-HMFFGMT-1970}).

The third point is the L'Hospital monotonic rule \cite%
{Vamanamurthy-JMAA-183-1994}, that is, the following lemma.

\begin{lemma}
\label{L-LMR}Let $-\infty <a<b<\infty $, and let $f,g:[a,b]\rightarrow 
\mathbb{R}$ be continuous functions that are differentiable on $\left(
a,b\right) $, with $f\left( a\right) =g\left( a\right) =0$ or $f\left(
b\right) =g\left( b\right) =0$. Assume that $g^{\prime }(x)\neq 0$ for each $%
x$ in $(a,b)$. If $f^{\prime }/g^{\prime }$ is increasing (decreasing) on $%
(a,b)$ then so is $f/g$.
\end{lemma}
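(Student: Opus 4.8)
The plan is to reduce the claim about the ratio $f/g$ to a pointwise comparison between $f/g$ and $f'/g'$ furnished by Cauchy's mean value theorem, and then to read off the sign of $(f/g)'$. Before doing so I would record one structural fact that the hypotheses secretly encode: since $g'$ is a derivative that never vanishes on $(a,b)$, Darboux's intermediate value property of derivatives forces $g'$ to keep a constant sign on the whole interval. Consequently $g$ is strictly monotone there, and in the normalization $f(a)=g(a)=0$ it follows that $g(x)\neq 0$ for $x\in(a,b)$, so that $f/g$ is well defined, with $g(x)$ of one fixed sign.

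Assume without loss of generality that $f(a)=g(a)=0$ and that $f'/g'$ is increasing. Fix $x\in(a,b)$ and apply Cauchy's mean value theorem to $f,g$ on $[a,x]$; using $f(a)=g(a)=0$ this produces a point $\xi\in(a,x)$ with
\[
\frac{f(x)}{g(x)}=\frac{f(x)-f(a)}{g(x)-g(a)}=\frac{f'(\xi)}{g'(\xi)}.
\]
The heart of the argument is the factorization of the quotient rule,
\[
\left(\frac{f}{g}\right)'(x)=\frac{f'(x)g(x)-f(x)g'(x)}{g(x)^{2}}
=\frac{g'(x)}{g(x)}\left[\frac{f'(x)}{g'(x)}-\frac{f(x)}{g(x)}\right],
\]
which is legitimate because $g'(x)\neq 0$. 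Substituting the Cauchy identity turns the bracket into $f'(x)/g'(x)-f'(\xi)/g'(\xi)$, which is nonnegative since $f'/g'$ is increasing and $\xi<x$. For the prefactor I distinguish the two sign cases: if $g'>0$ on $(a,b)$ then $g$ increases from $g(a)=0$, so $g(x)>0$ and $g'(x)/g(x)>0$; if $g'<0$ then $g$ decreases from $0$, so $g(x)<0$ and again $g'(x)/g(x)>0$. In either case $(f/g)'(x)\geq 0$, so $f/g$ is increasing on $(a,b)$.

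The decreasing hypothesis is handled by reversing every inequality, and the alternative endpoint normalization $f(b)=g(b)=0$ is handled by applying Cauchy's theorem on $[x,b]$: there the intermediate point lies in $(x,b)$ and hence exceeds $x$, so the bracket changes sign, while the prefactor $g'(x)/g(x)$ also changes sign because $g$ now approaches $0$ at $b$; their product therefore retains the correct sign and the same conclusion follows. The only genuinely delicate point, and the one I would be most careful to state explicitly, is the constant-sign claim for $g'$: it must be justified via Darboux's theorem rather than by invoking continuity of $g'$, since $f$ and $g$ are merely assumed differentiable. Everything else is routine; for the strict versions one simply notes that strict monotonicity of $f'/g'$ makes the bracket strictly positive (respectively negative) at interior points, whence $f/g$ is strictly monotone.
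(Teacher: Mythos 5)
Your proof is correct and complete: the paper itself states Lemma \ref{L-LMR} without proof, citing \cite{Vamanamurthy-JMAA-183-1994}, and your argument (Cauchy mean value theorem plus the quotient-rule factorization $(f/g)'=\frac{g'}{g}\bigl[\frac{f'}{g'}-\frac{f}{g}\bigr]$, with Darboux's theorem securing the constant sign of $g'$) is exactly the standard proof given in that cited source and in \cite[Theorem 1.25]{Anderson-CIIQM-JW-1997}. Your explicit appeal to Darboux's theorem, rather than an unwarranted continuity assumption on $g'$, is the one genuinely delicate point and you handled it properly.
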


Next we first give the monotonicity of the ratio $q_{c}(x)=R_{c}(x)/B_{c}(x)$.

\begin{proposition}
\label{P-qc-pd}The function $x\mapsto q_{c}(x)=R_{c}(x)/B_{c}(x)$ is
strictly decreasing from $(0,c/2]$ onto $[R_{c}(c/2)/B_{c}(c/2),1)$. In
particular, $q_{c}\left( x\right) $ is positive and strictly decreasing on $%
(0,c/2]$ if and only if $0<c\leq 2$.
\end{proposition}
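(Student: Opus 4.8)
The plan is to exploit the symmetry of both $B_c$ and $R_c$ under $x\mapsto c-x$, to reduce the monotonicity of $q_c$ to the monotonicity of a single auxiliary function built from $\psi$, and to handle the boundary behaviour and the positivity characterization separately. First I would record the limit at $0^+$: since $\Gamma(x)\sim 1/x$ and $-\psi(x)\sim 1/x$ as $x\to 0^+$ while $\Gamma(c-x)$ and $\psi(c-x)$ stay bounded, both $B_c(x)$ and $R_c(x)$ blow up like $1/x$, so
\[
q_c(0^+)=\lim_{x\to0^+}\frac{R_c(x)}{B_c(x)}=1 .
\]
This value is not attained on $(0,c/2]$ and will be the open right endpoint of the range, while the left endpoint is $q_c(c/2)=R_c(c/2)/B_c(c/2)$.

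For the monotonicity I would differentiate. From \eqref{Bc} one has $B_c'/B_c=\psi(x)-\psi(c-x)$, and from \eqref{Rc} one has $R_c'=\psi'(c-x)-\psi'(x)$; since $B_c>0$, the sign of $q_c'=(R_c'B_c-R_cB_c')/B_c^2$ equals the sign of
\[
\Phi(x)=\bigl[\psi'(c-x)-\psi'(x)\bigr]+R_c(x)\bigl[\psi(c-x)-\psi(x)\bigr].
\]
Setting $u=x$, $v=c-x$ and $p(t)=-\psi(t)-\gamma$, one has $R_c=p(u)+p(v)$, $\psi(c-x)-\psi(x)=p(u)-p(v)$ and $\psi'(c-x)-\psi'(x)=p'(u)-p'(v)$, whence
\[
\Phi(x)=\bigl[p'(u)-p'(v)\bigr]+\bigl[p(u)+p(v)\bigr]\bigl[p(u)-p(v)\bigr]=g(u)-g(v),
\]
where $g(t)=p(t)^2+p'(t)=(\psi(t)+\gamma)^2-\psi'(t)$. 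As $u<v$ on $(0,c/2)$, this shows $q_c'<0$ there as soon as $g$ is strictly increasing, and — crucially — the reduction uses no sign information about $R_c$, so it is valid for every $c>0$.

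The heart of the argument is therefore to prove that $g$ is strictly increasing on $(0,\infty)$, where $g'(t)=2\bigl(\psi(t)+\gamma\bigr)\psi'(t)-\psi''(t)$. On $[1,\infty)$ this is immediate: $\psi(t)+\gamma\ge0$, $\psi'(t)>0$ and $-\psi''(t)>0$, so $g'>0$. The main obstacle is the interval $(0,1)$, where $\psi(t)+\gamma<0$ makes the two contributions to $g'$ compete; there, using $\psi(1)=-\gamma$, the inequality $g'>0$ is equivalent to
\[
\frac{-\psi''(t)}{\psi'(t)}>2\bigl(\psi(1)-\psi(t)\bigr)=2\int_t^1\psi'(\sigma)\,d\sigma .
\]
I would attack this through the representations \eqref{psi-ir} and \eqref{psi-sr}: writing $\psi'(t)=\sum_k(k+t)^{-2}$ and $-\psi''(t)=2\sum_k(k+t)^{-3}$, the left-hand side is a weighted average of $\{(k+t)^{-1}\}$, which I would compare with the right-hand side, controlling the error by the complete monotonicity (hence log-convexity) of $\psi'$. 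This single polygamma inequality is the genuinely hard step.

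Finally I would assemble the statement. Strict monotonicity together with $q_c(0^+)=1$ gives that $q_c$ maps $(0,c/2]$ onto $[q_c(c/2),1)$. Since $B_c>0$, the positivity of $q_c$ on the interval is governed by its minimum value $q_c(c/2)$, whose sign is that of $R_c(c/2)=-2\psi(c/2)-2\gamma$. Because $\psi$ is strictly increasing with $\psi(1)=-\gamma$, we get $R_c(c/2)>0\iff c/2<1$, with $R_c(c/2)=0$ exactly at $c=2$ and $R_c(c/2)<0$ for $c>2$. Hence $q_c$ is strictly positive on $(0,c/2)$ and nonnegative on $(0,c/2]$ precisely when $0<c\le2$, which is the asserted characterization.
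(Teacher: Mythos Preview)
Your reduction $\Phi(x)=g(x)-g(c-x)$ with $g(t)=(\psi(t)+\gamma)^2-\psi'(t)$ is correct and elegant: it turns the two-variable statement into the single claim that $g$ is strictly increasing on $(0,\infty)$. The boundary analysis ($q_c(0^+)=1$) and the positivity characterization via the sign of $R_c(c/2)=-2\psi(c/2)-2\gamma$ are also fine.

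The gap is that you do not prove the key step. On $(0,1)$ you rewrite $g'(t)>0$ as
\[
\frac{-\psi''(t)}{\psi'(t)}>2\bigl(\psi(1)-\psi(t)\bigr),
\]
observe that the left side is a weighted average of $\{(k+t)^{-1}\}$, and then say only that you ``would compare'' the two sides using log-convexity of $\psi'$. That is not a proof; it is the whole difficulty repackaged. Indeed, since your identity $\Phi(x)=g(x)-g(c-x)$ holds for \emph{every} $c>0$, the monotonicity of $g$ on $(0,\infty)$ is exactly equivalent to the proposition itself, so nothing has yet been established. (A quick check shows the claim is at least plausible: the Laurent expansion at $0$ gives $g(t)=-3\zeta(2)+4\zeta(3)\,t+O(t^2)$, so $g'(0^+)=4\zeta(3)>0$; but this does not settle the whole interval.)

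By contrast, the paper attacks $B_c(x)q_c'(x)=R_c'(x)-R_c(x)\,B_c'(x)/B_c(x)$ head-on: it expands $\psi,\psi'$ via the partial-fraction series \eqref{psi-sr}, multiplies out the resulting double sums, and after cancellation obtains an expression of the form $(2x-c)$ times a sum of manifestly positive terms. Viewed through your lens, that computation \emph{is} a proof that $g$ is strictly increasing (take $u=x$, $v=c-x$ with arbitrary $c=u+v$), so the paper's series manipulation is precisely what is needed to close your gap. If you want to keep your framework, you could either import that computation or supply an independent proof of $g'>0$ on $(0,1)$; the vague comparison you sketch does not yet do the job.
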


\begin{proof}
Use of \eqref{Bc}, \eqref{Rc} and (\ref{psi-sr}) yields 
\begin{align*}
\frac{B_{c}^{\prime }(x)}{B_{c}(x)}& =\psi (x)-\psi (c-x)=-\frac{1}{x}+\frac{%
1}{c-x}+\sum_{k=1}^{\infty }\left[ \frac{x}{k(x+k)}-\frac{c-x}{k(c-x+k)}%
\right]  \\
& =\frac{2x-c}{x(c-x)}+\sum_{k=1}^{\infty }\frac{k(2x-c)}{k(c-x+k)(x+k)},
\end{align*}%
\begin{eqnarray*}
R_{c}^{\prime }(x) &=&\psi ^{\prime }(c-x)-\psi ^{\prime
}(x)=\sum_{k=0}^{\infty }\left[ \frac{1}{(c-x+k)^{2}}-\frac{1}{(x+k)^{2}}%
\right]  \\
&=&\sum_{k=0}^{\infty }\frac{(2k+c)(2x-c)}{(c-x+k)^{2}(x+k)^{2}},
\end{eqnarray*}%
\begin{align*}
R_{c}(x)& =\frac{1}{x}-\sum_{k=1}^{\infty }\frac{x}{k(x+k)}+\frac{1}{c-x}%
-\sum_{k=1}^{\infty }\frac{c-x}{k(c-x+k)} \\
& =\frac{c}{x(c-x)}-\sum_{k=1}^{\infty }\frac{ck+2x(c-x)}{k(x+k)(c-x+k)}.
\end{align*}%
Then%
\begin{align}
B_{c}(x)q_{c}^{\prime }(x)& =R_{c}^{\prime }(x)-R_{c}(x)\frac{B_{c}^{\prime
}(x)}{B_{c}(x)}=\psi ^{\prime }(c-x)-\psi ^{\prime }(x)-R_{c}(x)\left[ \psi
(x)-\psi (c-x)\right]   \notag \\
& =\sum_{k=0}^{\infty }\frac{(2k+c)(2x-c)}{(c-x+k)^{2}(x+k)^{2}}-\left[ 
\frac{c}{x(c-x)}-\sum_{k=1}^{\infty }\frac{ck+2x(c-x)}{k(x+k)(c-x+k)}\right] 
\notag \\
& \times \left[ \frac{2x-c}{x(c-x)}+\sum_{k=1}^{\infty }\frac{k(2x-c)}{%
k(c-x+k)(x+k)}\right]   \notag
\end{align}%
\begin{align*}
& =\sum_{k=0}^{\infty }\frac{(2k+c)(2x-c)}{(c-x+k)^{2}(x+k)^{2}}-\frac{c}{%
x(c-x)}\frac{2x-c}{x(c-x)} \\
& -\frac{c}{x(c-x)}\sum_{k=1}^{\infty }\frac{k(2x-c)}{k(c-x+k)(x+k)}+\frac{%
2x-c}{x(c-x)}\sum_{k=1}^{\infty }\frac{ck+2x(c-x)}{k(x+k)(c-x+k)} \\
& +\sum_{k=1}^{\infty }\frac{ck+2x(c-x)}{k(x+k)(c-x+k)}\sum_{k=1}^{\infty }%
\frac{k(2x-c)}{k(c-x+k)(x+k)}
\end{align*}%
\begin{align*}
& =\left( 2x-c\right) \sum_{k=1}^{\infty }\frac{2k+c}{(c-x+k)^{2}(x+k)^{2}}%
+2\left( 2x-c\right) \sum_{k=1}^{\infty }\frac{1}{k(x+k)(c-x+k)} \\
& +\left( 2x-c\right) \sum_{k=1}^{\infty }\frac{ck+2x(c-x)}{k(x+k)(c-x+k)}%
\sum_{k=1}^{\infty }\frac{k}{k(c-x+k)(x+k)}.
\end{align*}%
This means that sgn$q_{c}^{\prime }(x)=$sgn$\left( 2x-c\right) $, and hence,
the function $q$ is decreasing on $(0,c/2]$. An easy computation leads to $%
\lim_{x\rightarrow 0^{+}}q_{c}(x)=1$ and%
\begin{equation*}
q_{c}\left( \frac{c}{2}\right) =2\frac{\Gamma \left( c\right) }{\Gamma
\left( c/2\right) ^{2}}\left[ \psi \left( 1\right) -\psi \left( \frac{c}{2}%
\right) \right] .
\end{equation*}%
Obviously, $q_{c}\left( c/2\right) \geq 0$ if and only if $0<c\leq 2$. This
completes the proof.
\end{proof}

Second, we present the (higher order) monotonicity of the difference $\delta
_{c}(x)=B_{c}(x)-R_{c}(x)$ on $\left( 0,c\right) $.

\begin{proposition}
\label{P-B-R-hm}Let $c>0$. Define the function $\delta _{c}$ on $(0,c)$ by%
\begin{equation}
\delta _{c}(x)=B_{c}(x)-R_{c}(x).  \label{dc}
\end{equation}%
Then the following statements hold:

$(i)$ $\delta _{c}(x)$ has the integral representation: 
\begin{equation*}
\delta _{c}(x)=-2\int_{0}^{1}\frac{t^{c/2}[(1+t)^{c}-1+t]\cosh \left[ \left(
x-c/2\right) \ln t\right] -t(1+t)^{c}}{t\left( 1-t\right) (1+t)^{c}}dt
\end{equation*}%
with $\delta _{c}(0^{+})=0$.

$(ii)$ For $n\in \mathbb{N}$, we have%
\begin{equation*}
\delta _{c}^{\left( 2n-1\right) }(x)=\left\{ 
\begin{array}{cc}
>0 & \text{for }x\in \left( 0,c/2\right) , \\ 
<0 & \text{for }x\in \left( c/2,c\right) 
\end{array}%
\right. \text{ \ and \ }\delta _{c}^{\left( 2n\right) }(x)<0\text{ for }x\in
\left( 0,c\right) .
\end{equation*}%
Consequently, $\delta _{c}(x)>0$ for $x\in \left( 0,c\right) $, $x\mapsto
\delta _{c}^{\prime }\left( x\right) $ is completely monotonic on $\left(
0,c/2\right) $, and $x\mapsto -\delta _{c}^{\prime }\left( x\right) $ is
absolutely monotonic on $\left( c/2,c\right) $.

$(iii)$ The function $x\mapsto \delta _{c}(x)/\left[ x\left( c-x\right) \right]
$ is positive and strictly decreasing (resp. increasing) on $(0,c/2]$ (resp. 
$[c/2,c)$).
\end{proposition}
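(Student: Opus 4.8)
The plan is to prove $(i)$ first, since both $(ii)$ and $(iii)$ will be read off from the resulting integral representation. Starting from the third integral form of the beta function in \eqref{B-ir} with $(p,q)=(x,c-x)$ and from the second representation of the psi function in \eqref{psi-ir}, I would record
\begin{align*}
B_c(x) &= \int_0^1 \frac{t^{x-1}+t^{c-x-1}}{(1+t)^c}\,dt, \\
R_c(x) &= -\int_0^1 \frac{2-t^{x-1}-t^{c-x-1}}{1-t}\,dt,
\end{align*}
and then form $\delta_c=B_c-R_c$ over the common denominator $(1-t)(1+t)^c$. Writing $A=t^{x-1}+t^{c-x-1}$, the combined numerator is $-A[(1+t)^c-1+t]+2(1+t)^c$, and the elementary identity $t^{x-1}+t^{c-x-1}=2t^{c/2-1}\cosh[(x-c/2)\ln t]$ (which symmetrizes everything about $x=c/2$) turns this into exactly the claimed kernel. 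For $\delta_c(0^+)=0$ I would use the expansions $\Gamma(x)=1/x-\gamma+O(x)$ and $\psi(x)=-1/x-\gamma+O(x)$ as $x\to0^+$; these give $B_c(x)=1/x-\psi(c)-\gamma+O(x)$ and $R_c(x)=1/x-\psi(c)-\gamma+O(x)$, so the singular parts and the constant terms cancel and $\delta_c(x)=O(x)\to0$.

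For $(ii)$ I would differentiate the representation of $(i)$ under the integral sign in $x$. The term $-t(1+t)^c$ is independent of $x$ and disappears after one differentiation, leaving, for $m\ge1$,
\begin{equation*}
\delta_c^{(m)}(x)=-2\int_0^1 W(t)\,\frac{d^m}{dx^m}\cosh[(x-c/2)\ln t]\,dt,\qquad W(t)=\frac{t^{c/2-1}[(1+t)^c-1+t]}{(1-t)(1+t)^c}.
\end{equation*}
Since $(1+t)^c>1-t$ for $t\in(0,1)$, the weight $W$ is positive there. Using $\tfrac{d^{2n-1}}{dx^{2n-1}}\cosh[(x-c/2)\ln t]=(\ln t)^{2n-1}\sinh[(x-c/2)\ln t]$ and $\tfrac{d^{2n}}{dx^{2n}}\cosh[(x-c/2)\ln t]=(\ln t)^{2n}\cosh[(x-c/2)\ln t]$, the signs follow from $\ln t<0$ on $(0,1)$: for even order the integrand is $W(\ln t)^{2n}\cosh[\cdots]>0$, giving $\delta_c^{(2n)}<0$ on $(0,c)$; for odd order the sign of $\sinh[(x-c/2)\ln t]$ equals that of $(x-c/2)\ln t$, hence the opposite of the sign of $x-c/2$, which yields $\delta_c^{(2n-1)}>0$ on $(0,c/2)$ and $<0$ on $(c/2,c)$. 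The three consequences are then immediate: positivity of $\delta_c$ follows from $\delta_c(0^+)=0$, $\delta_c'>0$ on $(0,c/2)$, and the symmetry $\delta_c(x)=\delta_c(c-x)$; complete monotonicity of $\delta_c'$ on $(0,c/2)$ and absolute monotonicity of $-\delta_c'$ on $(c/2,c)$ are precisely the sign patterns $(-1)^n\delta_c^{(n+1)}\ge0$ and $-\delta_c^{(n+1)}\ge0$ just established.

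Positivity in $(iii)$ is clear from $\delta_c>0$ and $x(c-x)>0$ on $(0,c)$. For the monotonicity I would apply the L'Hospital Monotone Rule (Lemma \ref{L-LMR}) twice. Since $\delta_c(0^+)=0=[x(c-x)]|_{x=0}$, the ratio $\delta_c(x)/[x(c-x)]$ is decreasing on $(0,c/2]$ provided $\delta_c'(x)/(c-2x)$ is; and since $\delta_c'(c/2)=0=(c-2x)|_{x=c/2}$ (the first equality by symmetry), the latter is decreasing provided $\delta_c''(x)/(-2)$ is, i.e.\ provided $\delta_c''$ is increasing, i.e.\ $\delta_c'''\ge0$ on $(0,c/2)$, which is exactly the odd-order ($n=2$) sign from $(ii)$. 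To avoid any continuity question for $\delta_c'$ at $0$, the inner application can be run on each $[\varepsilon,c/2]$ using the vanishing at the right endpoint $c/2$ and then $\varepsilon\to0^+$. Monotonicity on $[c/2,c)$ follows from the symmetry $\delta_c(x)=\delta_c(c-x)$.

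I expect the main obstacle to be the analytic bookkeeping in $(i)$ and the justification of differentiating under the integral in $(ii)$: the kernel carries a $1/(1-t)$ singularity at $t=1$ and a factor $t^{c/2-1}$ at $t=0$, and the original integrand cannot be split into two separately convergent integrals because the $-t(1+t)^c$ piece alone produces a divergent $\int_0^1 dt/(1-t)$. One must therefore verify that each differentiated integrand is genuinely integrable: near $t=1$ the decay of $(\ln t)^m$ together with the vanishing of $\sinh[(x-c/2)\ln t]$ cancels the pole, while near $t=0$ the exponent $c/2-|x-c/2|>0$ (valid for $x\in(0,c)$) controls the singularity, so the differentiation and the ensuing sign analysis are both legitimate.
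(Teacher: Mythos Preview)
Your proposal is correct and follows essentially the same route as the paper: the same integral representation via \eqref{B-ir} and \eqref{psi-ir} combined over $(1-t)(1+t)^c$ and symmetrized by the $\cosh$ identity, the same sign analysis of the differentiated kernel for $(ii)$, and the same double application of the L'Hospital Monotone Rule (using $\delta_c(0^+)=0$ and $\delta_c'(c/2)=0$) for $(iii)$. Your additional remarks on the integrability of the differentiated kernel near $t=0$ and $t=1$ supply justification that the paper leaves implicit, but the overall argument is the same.
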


\begin{proof}
(i) Using the third integral representation of (\ref{B-ir}) and the second one of (%
\ref{psi-ir}) we obtain 
\begin{align}
\delta _{c}(x)& =B_{c}(x)-R_{c}(x)=-\int_{0}^{1}\frac{\left[ \left(
t+1\right) ^{c}-1+t\right] (t^{x}+t^{c-x})-2t(1+t)^{c}}{t(1-t)(1+t)^{c}}dt
\label{IROD} \\
& =-2\int_{0}^{1}\frac{t^{c/2}\left[ \left( t+1\right) ^{c}-1+t\right] \cosh %
\left[ \left( x-c/2\right) \ln t\right] -t(1+t)^{c}}{t(1-t)(1+t)^{c}}dt. 
\notag
\end{align}
Since
\begin{equation*}
\lim_{x\rightarrow 0^{+}}B_{c}(x)-\frac{1}{x}
=\lim_{x\rightarrow 0^{+}}R_{c}(x)-\frac{1}{x}=-\gamma-\psi\left(c\right) ,
\end{equation*}
we have that%
\begin{equation*}
\delta _{c}(0^{+})=\lim_{x\rightarrow 0^{+}}\left[ \left( B_{c}(x)-\frac{1}{x%
}\right) -\left( R_{c}(x)-\frac{1}{x}\right) \right] =0.
\end{equation*}

(ii) According to \eqref{IROD}, we have that, for $n\in \mathbb{N}$,
\begin{align}
\delta _{c}^{\left(2n-1\right)}(x)=& -2\int_{0}^{1}\frac{t^{c/2}\left[\left(t+1\right)^{c}-1+t\right]\left(\ln t\right)^{2n-1}\sinh\left[\left( x-c/2\right)\ln{t}\right] }{t(1-t)(1+t)^{c}}dt \label{dc-2n-1'}\\
&\left\{ 
\begin{array}{cc}
	>0 &\text{for }x\in \left( 0,c/2\right) , \\ 
	<0 &\text{for }x\in \left( c/2,c\right),
\end{array}
\right.\notag
\end{align}
\begin{equation}
\delta _{c}^{\left(2n\right) }(x)=-2\int_{0}^{1}\frac{t^{c/2}\left[ \left(
t+1\right) ^{c}-1+t\right] \left( \ln t\right) ^{2n}\cosh \left[ \left(
x-c/2\right) \ln t\right] }{t(1-t)(1+t)^{c}}dt<0  \label{dc-2n'}
\end{equation}%
for $x\in \left( 0,c\right) $. Since $\delta _{c}^{\prime }(x)>0$ for $x\in
\left( 0,c/2\right) $ and $\delta _{c}^{\prime }(x)<0$ for $x\in \left(
c/2,c\right) $, we derive that%
\begin{equation}
0=\delta (0^{+})<\delta _{c}(x)<\delta _{c}(c/2)=B_{c}(c/2)-R_{c}(c/2)
\label{dc>0}
\end{equation}%
for $x\in \left( 0,c/2\right) $. Due to $\delta _{c}(c-x)=\delta _{c}(x)$,
we see that the inequality (\ref{dc>0}) also holds for $x\in \left(
c/2,c\right) $.

(iii) Let $p\left( x\right) =x\left( c-x\right) $. Then $\delta _{c}\left(
0^{+}\right) =p\left( 0^{+}\right) =\delta _{c}\left( c^{-}\right) =p\left(
c^{-}\right) =0$. By (\ref{dc-2n-1'}) and $p^{\prime }\left( x\right) =c-2x$%
, we see that $\delta _{c}^{\prime }\left( c/2\right) =p^{\prime }\left(
c/2\right) =0$. Then by (\ref{dc-2n'}) and (\ref{dc-2n-1'}),%
\begin{equation*}
\frac{\delta _{c}^{\prime \prime }\left( x\right) }{p^{\prime \prime }\left(
x\right) }=-\frac{1}{2}\delta _{c}^{\prime \prime }\left( x\right) \text{ \
and \ \ }\left[ \frac{\delta _{c}^{\prime \prime }\left( x\right) }{%
p^{\prime \prime }\left( x\right) }\right] ^{\prime }=-\frac{1}{2}\delta
_{c}^{\prime \prime \prime }\left( x\right) \left\{ 
\begin{array}{cc}
<0 & \text{for }x\in \left( 0,c/2\right) , \\ 
>0 & \text{for }x\in \left( c/2,c\right) .%
\end{array}%
\right. 
\end{equation*}%
Applying L'Hospital monotonic rule twice, one can easily see that $\delta _{c}\left( x\right)
/p\left( x\right) $ is decreasing on $\left( 0,c/2\right) $ and increasing
on $\left( c/2,c\right) $. The positivity of $\delta _{c}\left( x\right)
/p\left( x\right) $ on $\left( 0,c\right) $ follows from (\ref{dc>0}),
thereby completing the proof.
\end{proof}

Third, we consider the higher order monotonicity of the function 
\begin{equation*}
\tilde{R}_{n}\left( x\right) =R_{c}\left( x\right) -\sum_{k=0}^{n-1}\frac{%
2k+c}{\left( k+x\right) \left( k+c-x\right) }
\end{equation*}%
on $\left( 0,c\right) $, which is contained in the following proposition.

\begin{proposition}
\label{P-Rnw-hm} For $n\in \mathbb{N}$ and $m\in \mathbb{N}\cup\{0\}$, Then $\tilde{R}_{n}\left( x\right) $ satisfies%
\begin{equation*}
\tilde{R}_{n}^{\left( 2m+1\right) }\left( x\right) \left\{ 
\begin{array}{ll}
<0 & \text{for }x\in \left( 0,c/2\right) , \\ 
>0 & \text{for }x\in (c/2,c]%
\end{array}%
\right. \text{ \ and \ }\tilde{R}_{n}^{\left( 2m+2\right) }\left( x\right) >0%
\text{ for }x\in (0,c].
\end{equation*}%
In particular, $\tilde{R}_{n}\left( x\right) $ is decreasing on $(0,c/2)$,
increasing on $(c/2,c]$, and $\tilde{R}_{n}\left( x\right) <0$ for all $x\in
(0,c]$.
\end{proposition}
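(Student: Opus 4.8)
The plan is to collapse both $R_c$ and the finite sum into a single tail series and then read off every sign by differentiating term by term. The crucial algebraic observation is the partial-fraction identity
\[
\frac{2k+c}{(k+x)(k+c-x)} = \frac{1}{k+x} + \frac{1}{k+c-x},
\]
which exactly matches the summand subtracted in the definition of $\tilde{R}_n$. Combined with the digamma series \eqref{psi-sr} (equivalently, with the expression for $R_c(x)$ found in the proof of Proposition \ref{P-qc-pd}), this gives
\[
R_c(x) = \sum_{k=0}^{\infty}\left(\frac{1}{k+x} + \frac{1}{k+c-x} - \frac{2}{k+1}\right).
\]
Subtracting $\sum_{k=0}^{n-1}\big((k+x)^{-1} + (k+c-x)^{-1}\big)$ cancels the first $n$ terms and leaves
\[
\tilde{R}_n(x) = \sum_{k=n}^{\infty}\left(\frac{1}{k+x} + \frac{1}{k+c-x} - \frac{2}{k+1}\right) - 2\sum_{j=1}^{n}\frac{1}{j}.
\]
Since $n\geq 1$, for every $x\in(0,c]$ and $k\geq n$ one has $k+x\geq 1$ and $k+c-x\geq k\geq 1$, so each term is smooth and bounded on $(0,c]$; the series and all of its formally differentiated series converge uniformly there. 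Hence termwise differentiation is legitimate on all of $(0,c]$, including the endpoint $x=c$, where the singularity of $R_c$ and that of the $k=0$ term of the finite sum cancel.

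Then for $j\geq 1$ I would compute
\[
\tilde{R}_n^{(j)}(x) = j!\sum_{k=n}^{\infty}\left(\frac{(-1)^{j}}{(k+x)^{j+1}} + \frac{1}{(k+c-x)^{j+1}}\right),
\]
and split into parities. For $j=2m+1$ the factor $(-1)^{j}=-1$, so the summand equals $(k+c-x)^{-(2m+2)}-(k+x)^{-(2m+2)}$; as $t\mapsto t^{-(2m+2)}$ is decreasing on $(0,\infty)$, this is negative exactly when $c-x>x$, i.e. $x<c/2$, and positive when $x>c/2$, yielding the stated sign of $\tilde{R}_n^{(2m+1)}$ on $(0,c/2)$ and on $(c/2,c]$. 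For $j=2m+2$ the factor is $+1$ and each summand is a sum of two positive numbers (again $k+c-x\geq k\geq 1$ for $x\leq c$), so $\tilde{R}_n^{(2m+2)}(x)>0$ on $(0,c]$. This disposes of the higher-order statements.

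For the "in particular" conclusion, the case $m=0$ gives $\tilde{R}_n'<0$ on $(0,c/2)$ and $\tilde{R}_n'>0$ on $(c/2,c]$, i.e. $\tilde{R}_n$ is strictly decreasing on $(0,c/2)$ and strictly increasing on $(c/2,c]$. Because $R_c$ and the finite sum are both invariant under $x\mapsto c-x$, the function $\tilde{R}_n$ is symmetric about $x=c/2$; together with the monotonicity this shows that $\tilde{R}_n$ attains its maximum over $(0,c]$ at $x=c$, so it suffices to prove $\tilde{R}_n(c)<0$. Evaluating the series at $x=c$ and using the telescoping identities $\sum_{k=n}^{\infty}\big(k^{-1}-(k+1)^{-1}\big)=1/n$ and $\sum_{k=n}^{\infty}\big((k+c)^{-1}-(k+1)^{-1}\big)=\psi(n+1)-\psi(n+c)$, together with $\psi(n+1)=H_n-\gamma$ (where $H_m=\sum_{j=1}^{m}1/j$), I would obtain
\[
\tilde{R}_n(c) = -H_{n-1}-\gamma-\psi(n+c).
\]
Since $\psi$ is strictly increasing, $\psi(n+c)>\psi(n)=H_{n-1}-\gamma\geq -H_{n-1}-\gamma$, whence $\tilde{R}_n(c)<0$; by symmetry and monotonicity $\tilde{R}_n(x)<0$ throughout $(0,c]$.

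The differentiation and sign bookkeeping are routine once the tail series is in place, so the work is largely organizational. The two points that need genuine care are: (a) justifying the series representation and termwise differentiation across the apparent singularity at $x=c$, where one must notice the exact cancellation between $R_c$ and the $k=0$ term of the finite sum; and (b) recognizing that the negativity $\tilde{R}_n<0$ is controlled entirely by the single boundary value $\tilde{R}_n(c)$, whose sign then follows cleanly from the monotonicity of $\psi$.
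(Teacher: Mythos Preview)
Your proof is correct, and the underlying idea (recognize $\tilde{R}_n$ as the tail of the digamma expansion) is the same as the paper's, but the execution differs. The paper first uses the recurrence $\psi(x+1)-\psi(x)=1/x$ to telescope the finite sum and obtain the closed form
\[
\tilde{R}_n(x)=-\psi(n+x)-\psi(n+c-x)-2\gamma,
\]
which your tail series is exactly the series expansion of. From this closed form the paper invokes the integral representation of $\psi$ to write $\tilde{R}_n'(x)=2\int_0^\infty t\,\sinh[(x-c/2)t]\,e^{-(n+c/2)t}/(1-e^{-t})\,dt$, so the sign of every derivative is read off from a single $\sinh$ or $\cosh$ factor rather than from a termwise comparison; and the negativity follows by evaluating the closed form at the boundary, $\tilde{R}_n(0^+)=\tilde{R}_n(c)=-\psi(n+c)-\psi(n)-2\gamma$, and comparing with $-2\psi(1)-2\gamma=0$. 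Your route trades the integral representation for series and handles the signs termwise; the cost is the (routine) justification of termwise differentiation and of the endpoint $x=c$, which the paper gets for free from the smoothness of $\psi$ at $n+c-x$ and $n+x$. Both arguments arrive at the same boundary value (your $-H_{n-1}-\gamma-\psi(n+c)$ equals the paper's $-\psi(n)-\psi(n+c)-2\gamma$ via $\psi(n)=H_{n-1}-\gamma$).
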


\begin{proof} Due to \eqref{rfpsi}, it is not difficult to verify that
\begin{equation*}
\tilde{R}_{n}\left( x\right) =-\psi \left( n+c-x\right) -\psi \left(
n+x\right) -2\gamma.
\end{equation*}
Using \eqref{psi-ir} and then differentiating give
\begin{equation*}
\tilde{R}_{n}^{\prime }\left( x\right)
=\int\limits_{0}^{\infty }\frac{e^{-\left( n+c-x\right) t}-e^{-\left(
n+x\right) t}}{1-e^{-t}}tdt=2\int\limits_{0}^{\infty }\frac{t\sinh \left[
\left( x-c/2\right) t\right] }{\left( 1-e^{-t}\right) e^{\left( n+c/2\right)
t}}dt.
\end{equation*}%
Then for $m\in \mathbb{N}\cup\{0\}$,%
\begin{align*}
\tilde{R}_{n}^{\left( 2m+1\right) }\left( x\right) =&
2\int\limits_{0}^{\infty }\frac{t^{2m+1}\sinh \left[ \left( x-c/2\right) t%
\right] }{\left( 1-e^{-t}\right) e^{\left( n+c/2\right) }}dt\left\{ 
\begin{array}{cc}
<0 & \text{if }x\in \left( 0,c/2\right) , \\ 
>0 & \text{if }x\in (c/2,c],%
\end{array}%
\right.  \\
\tilde{R}_{n}^{\left( 2m+2\right) }\left( x\right) =&
2\int\limits_{0}^{\infty }\frac{t^{2m+2}\cosh \left[ \left( x-c/2\right) t%
\right] }{\left( 1-e^{-t}\right) e^{\left( n+c/2\right) }}dt>0,\text{\ for }%
x\in (0,c].
\end{align*}%
Moreover, for $n\in\mathbb{N}$,
\begin{equation*}
\lim_{x\rightarrow 0^{+}}\tilde{R}_{n}(x)=\lim_{x\rightarrow c^{-}}\tilde{R}%
_{n}(x)=-\psi \left( n+c\right) -\psi \left( n\right) -2\gamma \leq -2\psi
(1)-2\gamma =0.
\end{equation*}%
This completes the proof.
\end{proof}

Taking $n=1$ in Proposition \ref{P-Rnw-hm} gives the following corollary.

\begin{corollary}
\label{C-R1w-d}The function%
\begin{equation*}
x\mapsto \tilde{R}_{1}\left( x\right) =R_{c}\left( x\right) -\frac{c}{%
x\left( c-x\right) }
\end{equation*}%
is strictly decreasing on $(0,c/2)$, and strictly increasing on $(c/2,c]$.
Moreover, 
\begin{equation*}
\tilde{R}_{1}\left( 0^{+}\right) =\tilde{R}_{1}\left( c^{-}\right) =-\psi
(1+c)-\gamma ,\quad \tilde{R}_{1}\left( \frac{c}{2}\right) =-2\psi \left( 
\frac{c}{2}\right) -2\gamma -\frac{4}{c}.
\end{equation*}
\end{corollary}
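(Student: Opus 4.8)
The plan is to obtain the corollary as the direct specialization $n=1$ of Proposition \ref{P-Rnw-hm}, supplemented by an elementary evaluation of the three distinguished values.

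First I would observe that when $n=1$ the defining sum $\sum_{k=0}^{n-1}(2k+c)/[(k+x)(k+c-x)]$ collapses to its single term $k=0$, namely $c/[x(c-x)]$, so that $\tilde{R}_{1}(x)=R_{c}(x)-c/[x(c-x)]$ is exactly the function in the statement. The monotonicity is then immediate: the ``in particular'' clause of Proposition \ref{P-Rnw-hm} (equivalently, the $m=0$ entry of its derivative sign table, which gives $\tilde{R}_{1}^{\prime}(x)<0$ on $(0,c/2)$ and $\tilde{R}_{1}^{\prime}(x)>0$ on $(c/2,c]$) asserts precisely that $\tilde{R}_{1}$ is strictly decreasing on $(0,c/2)$ and strictly increasing on $(c/2,c]$. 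No additional argument is needed for the monotonicity claim.

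It remains only to compute the boundary and midpoint values. Here I would invoke the closed form $\tilde{R}_{n}(x)=-\psi(n+c-x)-\psi(n+x)-2\gamma$ established inside the proof of Proposition \ref{P-Rnw-hm}, which for $n=1$ reads $\tilde{R}_{1}(x)=-\psi(1+c-x)-\psi(1+x)-2\gamma$. Letting $x\to0^{+}$ and using $\psi(1)=-\gamma$ yields $\tilde{R}_{1}(0^{+})=-\psi(1+c)+\gamma-2\gamma=-\psi(1+c)-\gamma$, and the symmetry under $x\leftrightarrow c-x$ gives the same value as $x\to c^{-}$. At the midpoint $x=c/2$ one obtains $-2\psi(1+c/2)-2\gamma$; applying the recurrence $\psi(1+c/2)=\psi(c/2)+2/c$ from \eqref{rfpsi} then produces the asserted form $-2\psi(c/2)-2\gamma-4/c$. (Equivalently, the midpoint value follows straight from the definition, since $R_{c}(c/2)=-2\psi(c/2)-2\gamma$ and $c/(c/2)^{2}=4/c$.)

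Because the analytic substance has already been carried out in Proposition \ref{P-Rnw-hm}, there is no genuine obstacle in this corollary; the only point requiring any care is reconciling the midpoint expression $-2\psi(1+c/2)-2\gamma$ with the stated $-2\psi(c/2)-2\gamma-4/c$ via the digamma recurrence \eqref{rfpsi}, and checking that the two endpoint limits coincide by the symmetry $\tilde{R}_{1}(c-x)=\tilde{R}_{1}(x)$.
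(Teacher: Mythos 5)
Your proposal is correct and follows exactly the paper's route: the paper derives this corollary solely by setting $n=1$ in Proposition \ref{P-Rnw-hm}, which is precisely your argument, and your explicit verification of the limiting values via the closed form $\tilde{R}_{1}(x)=-\psi(1+c-x)-\psi(1+x)-2\gamma$, the identity $\psi(1)=-\gamma$, and the recurrence \eqref{rfpsi} just fills in computations the paper leaves implicit. No discrepancies to report.
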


Fourth, the following three monotonicity properties are needed to prove Lemma \ref{L-S(R)<0}, which was proved in \cite[Theorems 4--6]{Yang-2024-submitted}.

\begin{proposition} \label{P-B,R-m}
$(i)$ The the function $x\mapsto R(x,x)/B(x,x)$ is strictly
decreasing and concave from $\left( 0,\infty \right) $ onto $\left( -\infty
,1\right) $.

$(ii)$ The function $x\mapsto \left[ B\left( x,x\right) -R\left( x,x\right) %
\right] /x^{2}$ is decreasing from $\left( 0,\infty \right) $ onto $\left(
0,2\zeta \left( 3\right) \right) $.

$(iii)$ The function $x\mapsto \left[ xR\left( x,x\right) -2\right] /x^{2}$ is
increasing from $\left( 0,\infty \right) $ onto $\left( -\pi ^{2}/3,0\right) 
$.
\end{proposition}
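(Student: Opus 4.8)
The plan is to treat all three assertions as statements about the two diagonal functions $R(x,x)=-2\psi(x)-2\gamma$ and $B(x,x)=\Gamma(x)^2/\Gamma(2x)$, whose logarithmic derivative is $(\ln B(x,x))'=2\psi(x)-2\psi(2x)$. First I would record the sign data that drives everything. From the series \eqref{psi-sr} one has $\frac{d^n}{dx^n}R(x,x)=-2\psi^{(n)}(x)$, whose sign is $(-1)^n$ for $n\ge1$; in particular $R''(x,x)=4\sum_{k\ge0}(x+k)^{-3}>0$ and $R'''(x,x)=-12\sum_{k\ge0}(x+k)^{-4}<0$. For $B$, the substitution $s=e^{u}$ in the Beta integral \eqref{B-ir} gives $B(x,x)=\int_{-\infty}^{\infty}e^{-2x\lambda(u)}\,du$ with $\lambda(u)=\ln\!\big(2\cosh(u/2)\big)\ge\ln2>0$, so $B(x,x)$ is completely monotonic in $x$ and $(-1)^{n}B^{(n)}(x,x)>0$. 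The three ranges $(-\infty,1)$, $(0,2\zeta(3))$, $(-\pi^2/3,0)$ are then read off from the Laurent expansions at $0^+$ (both behave like $2/x$, with $R(x,x)=\tfrac2x-2\zeta(2)x+2\zeta(3)x^2-\cdots$ and $B(x,x)=\tfrac2x-2\zeta(2)x+4\zeta(3)x^2-\cdots$) and from Stirling / the duplication formula at $+\infty$ (where $B(x,x)\to0^+$ and $R(x,x)\sim-2\ln x$).

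Next I would reduce each monotonicity assertion to a single sign condition via the L'Hospital monotone rule (Lemma \ref{L-LMR}). Part $(iii)$ is the cleanest: writing $h(x)=xR(x,x)-2$ one checks $h(0^+)=h'(0^+)=0$, so the sign of $[h(x)/x^2]'$ equals that of $g(x)=xh'(x)-2h(x)$; now $g$ and $g'$ vanish at $0^+$ while $g''(x)=x\big(3R''(x,x)+xR'''(x,x)\big)$, and by the two series above
\[
3R''(x,x)+xR'''(x,x)=12\sum_{k\ge0}\frac{(x+k)-x}{(x+k)^{4}}=12\sum_{k\ge0}\frac{k}{(x+k)^{4}}>0,
\]
which forces $g>0$ and hence the claimed increase. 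For part $(ii)$, setting $\delta(x)=B(x,x)-R(x,x)$ one has $\delta(0^+)=\delta'(0^+)=0$, so two applications of Lemma \ref{L-LMR} (first with denominator $x^2$, then with $2x$) reduce ``$\delta(x)/x^2$ decreasing'' to $\delta'''(x)<0$. For part $(i)$, a direct computation gives $q'(x)=-2\big[\psi'(x)+R(x,x)(\psi(x)-\psi(2x))\big]/B(x,x)$, so that $q=R(x,x)/B(x,x)$ is decreasing iff $\psi'(x)+2(\psi(x)+\gamma)(\psi(2x)-\psi(x))>0$; differentiating once more reduces the concavity to a companion second-order inequality of the same type.

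The main obstacle is the pair of inequalities feeding parts $(i)$ and $(ii)$, because near $x=0$ the singular parts of $R(x,x)$ and $B(x,x)$ (both $\sim2/x$) cancel: both target quantities vanish to leading order, so their sign is decided by subleading terms and no crude bound suffices. For $(ii)$ I would prove $\delta'''<0$ by comparing the two integral representations
\[
R'''(x,x)=-2\int_0^{\infty}\frac{s^{3}e^{-xs}}{1-e^{-s}}\,ds,\qquad B'''(x,x)=-8\int_{-\infty}^{\infty}\lambda(u)^{3}e^{-2x\lambda(u)}\,du,
\]
that is, by establishing the kernel comparison $4\int_{-\infty}^{\infty}\lambda(u)^{3}e^{-2x\lambda(u)}\,du>\int_0^{\infty}s^{3}e^{-xs}/(1-e^{-s})\,ds$; matching these two differently structured integrals is the crux. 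For the decrease and concavity in $(i)$, the cleanest route I foresee is to substitute $R(x,x)=-2(\psi(x)+\gamma)$ and feed in the integral forms $\psi'(x)=\int_0^1 -t^{x-1}\ln t/(1-t)\,dt$, $\psi(x)+\gamma=\int_0^1(1-t^{x-1})/(1-t)\,dt$, and $\psi(2x)-\psi(x)=\int_0^1 t^{x-1}(1-t^{x})/(1-t)\,dt$ coming from \eqref{psi-ir}, and then show that the resulting combination of single and double integrals has a sign-definite integrand after symmetrization in the integration variables, the cancellation at the origin again being the delicate point.
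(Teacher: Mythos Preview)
The paper does not actually prove this proposition: it quotes the three statements and refers the reader to \cite[Theorems~4--6]{Yang-2024-submitted} for the proofs. So there is no in-paper argument to compare your proposal against, and your plan has to stand on its own merits.

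Your treatment of part $(iii)$ is clean and correct: the identity
\[
3R''(x,x)+xR'''(x,x)=12\sum_{k\ge0}\frac{k}{(x+k)^{4}}
\]
really does settle the sign of $g''$ and hence of $[h(x)/x^{2}]'$, and the endpoint values follow from your Laurent data.

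For part $(ii)$, however, your reduction is wrong, not merely incomplete. Two applications of Lemma~\ref{L-LMR} would reduce the decrease of $\delta(x)/x^{2}$ to the decrease of $\delta''(x)$, i.e.\ to $\delta'''(x)<0$ on all of $(0,\infty)$; but this last inequality is \emph{false} for large $x$. Indeed $B(x,x)$ and all its derivatives decay like $4^{-x}$ (by your own complete-monotonicity representation), whereas $R'''(x,x)=-2\psi'''(x)\sim -4/x^{3}$, so
\[
\delta'''(x)=B'''(x,x)-R'''(x,x)\sim \frac{4}{x^{3}}>0\qquad(x\to\infty).
\]
Equivalently, $\delta''(x)\sim -2/x^{2}\to 0^{-}$, so $\delta''$ is eventually increasing and cannot be monotone on $(0,\infty)$. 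The second application of Lemma~\ref{L-LMR} therefore fails, and your proposed kernel comparison $4\int\lambda(u)^{3}e^{-2x\lambda(u)}du>\int s^{3}e^{-xs}/(1-e^{-s})\,ds$ is simply not true for all $x>0$. You will need a different route here---for instance, stopping after one application of Lemma~\ref{L-LMR} and proving directly that $\delta'(x)/x$ is decreasing, or working with the single integral representation of $\delta$ in Proposition~\ref{P-B-R-hm} specialised to $c=2x$.

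For part $(i)$ your outline never gets past writing down the relevant combinations of $\psi,\psi',\psi''$; the ``symmetrisation'' you allude to is the entire difficulty and nothing in the proposal indicates how the singular contributions near $t=1$ in the integrals from \eqref{psi-ir} will be organised to produce a sign-definite integrand. As it stands, parts $(i)$ and $(ii)$ remain open in your plan.
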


\medskip
\section{Concluding remarks}

In this paper, we proved a stronger result than the positive answer to Open problem AVV by using
recurrence method and new properties of $B\left( a,b\right) $ and $R\left(
a,b\right) $, which states that, if $a,b>0$ with $a+b\leq 1$ and $2\leq
p\leq R\left( a,b\right) $, then the function%
\begin{equation*}
Q_{p}(x)\equiv Q_{p}(a,b;x)=\frac{1}{1-x}\left[ \frac{B\left( a,b\right)
F\left( a,b;a+b;x\right) }{p-\ln \left( 1-x\right) }-1\right] 
\end{equation*}%
is absolutely monotonic on $\left( 0,1\right) $. This solved an open problem
that had not been solved for nearly 30 years.

Finally, we present several remarks.

\begin{remark}
Using the increasing property and convexity of $Q_{R}(x)$ on $(0,1)$ with
\begin{equation*}
Q_{R}(0)=\frac{B\left( a,b\right) }{R\left( a,b\right) }-1\text{ \ and \ }%
Q_{R}(1^{-})=ab,
\end{equation*}%
we have%
\begin{align*}
1+\left[ \frac{B\left( a,b\right) }{R\left( a,b\right)}-1\right] (1-x)<\frac{B\left(a,b\right) F\left( a,b;a+b;x\right)}{R\left(
a,b\right)-\ln\left( 1-x\right) }\\
<1+ab(1-x)-\left[ab-\left(\frac{B(a,b)}{R(a,b)}-1\right)\right](1-x)^2
\end{align*}%
for $x\in\left(0,1\right)$, which extend and improve the double
inequality \eqref{I-WCQ}. Furthermore, it is apparent from \eqref{wn}, \eqref{un} and \eqref{an+1-rra} that $\alpha_{n}$ can be calculated. Our main result implies that, for any $n\in\mathbb{N}$, the function
\begin{equation*}
x\mapsto\frac{Q_{R}(x)-\sum\limits_{k=0}^{n}\alpha_{k}x^k}{x^{n+1}}
\end{equation*}
is strictly increasing from $(0,1)$ onto $(\alpha_{n+1},ab-\sum_{k=0}^{n}\alpha_{k})$. Consequently, for $x\in(0,1)$,
\begin{align*}
1+(1-x)\left(\alpha_{n+1}x^{n+1}+\sum_{k=0}^{n}\alpha_{k}x^k\right)<\frac{B\left(a,b\right) F\left( a,b;a+b;x\right)}{R\left(
	a,b\right)-\ln\left( 1-x\right)}\\
<1+(1-x)\left[\left(ab-\sum_{k=0}^{n}\alpha_{k}\right)x^{n+1}+\sum_{k=0}^{n}\alpha_{k}x^{k}\right].
\end{align*}
Particularly, letting $n=2$, $a=1/2$, $b=1/2$ and next replacing $x$ by $r^2$, we obtain
\begin{align*}
&1+(1-r^2)\left(\alpha_{3}^*r^{6}+\sum_{k=0}^{2}\alpha_{k}^*r^{2k}\right)< \frac{\mathcal{K}(r)}{\ln(4/r')}\\
&<1+(1-r^2)\left[\left(\frac{1}{4}-\sum_{k=0}^{2}\alpha_{k}^*\right)r^{6}
+\sum_{k=0}^{2}\alpha_{k}^*r^{2k}\right].
\end{align*}
where
\begin{align*}
	\alpha_{0}^*=&\frac{\pi}{4 \ln{2}}-1, \quad \alpha_{1}^*=\frac{-16 \left(\ln{2}\right)^{2}+5 \pi  \left(\ln{2}\right)-\pi}{16\left(\ln{2}\right)^{2}},\\
	\alpha_{2}^*=&\frac{-256 \left(\ln{2}\right)^{3}+89 \pi  \left(\ln{2}\right)^{2}-28 \pi  \left(\ln{2}\right)+4 \pi}{256\left(\ln{2}\right)^{3}},\\
	\alpha_{3}^*=&\frac{-3072 \left(\ln{2}\right)^{4}+1143 \pi  \left(\ln{2}\right)^{3}-451 \pi  \left(\ln{2}\right)^{2}+108 \pi  \left(\ln{2}\right)-12 \pi}{3072\left(\ln{2}\right)^{4}}.
\end{align*}

\end{remark}

\begin{remark}
Taking $p=R\left( a,b\right) $ and $b=1-a$ in Theorem \ref{MT}, we see that
the function defined by (\ref{Y(r)}) is absolutely monotonic on $\left(
0,1\right) $.
\end{remark}

\begin{remark} Anderson, Vamanamurthy and Vuorinen \cite{Anderson-SIAM-JMA-23-1992} also conjectured that the inequality	
\begin{equation}\label{Kanother}
\mathcal{K}(\sqrt{x})<\ln\left(1+\frac{4}{\sqrt{1-x}}\right)
-\left(\ln{5}-\frac{\pi}{2}\right)(1-\sqrt{x})
\end{equation}
holds for all $x\in(0,1)$. The conjecture was proved in \cite{Qiu-SIAM-JMA-27(3)-1996} by Qiu, Vamanamurthy and Vuorinen. Recently, Yang, Tian \cite{YT-AADM-2019} and Wang, Chu, Li et al. \cite{WCL-AADM-2020} proved that $x\mapsto \mathcal{K}(\sqrt{x})/\ln(1+4/\sqrt{1-x})$ is strictly increasing and convex on $(0,1)$ with the range $(\pi/(2\ln{5}),1)$, and $x\mapsto \frac{d^2}{dx^2}\left[\mathcal{K}(\sqrt{x})-\ln\left(1+4/\sqrt{1-x}\right)\right]$ is absolutely monotonic on $(0,1)$. As applications of these results, several sharp inequalities for the complete elliptic integral of the first kind have been derived, which improve \eqref{Kanother}.
It is not difficult to find that $\ln(1+4/\sqrt{1-x})$ is another good approximation of $\mathcal{K}(\sqrt{x})$ similar to $\ln(4/\sqrt{1-x})$. Computer simulation and experiments show that
\begin{equation*}
x\mapsto \frac{1-\mathcal{K}(\sqrt{x})/\ln(1+4/\sqrt{1-x})}{1-x}
\end{equation*}
is absolutely monotonic on $(0,1)$. If this conclusion is proved to be true, it will also yields better estimates for $\mathcal{K}(\sqrt{x})$ than \eqref{Kanother}. 
Furthermore, it is natural to ask the following question, for what values of positive numbers $a$ and $b$, the function 
\begin{equation*}
x\mapsto \displaystyle\frac{1-BF(a,b;a+b;x)/\ln[1+e^{R}/(1-x)]}{1-x}
\end{equation*}
is absolutely monotonic on $(0,1)$. 
\end{remark}

\begin{remark}
Those properties of $B_{c}(x)$ and $R_{c}(x)$ proved in Section 3 are of independent interest. For example, by Proposition \ref{P-B-R-hm} and the series representation of $R_{c}(x)$ proved in \cite[Theorem 3.2]{BWC-2023}, we also obtain the series expansions of $B_{c}(x)$, which will extend the corresponding result of $B(x,1-x)$. Certainly, many other analytical properties and functional inequalities of $B_{c}(x)$, $R_{c}(x)$ and their combinations are worth exploring.
\end{remark}

\end{document}